\title[Global medial structure]{The global medial structure of regions in $\mathbb{R}^3$}
\author{James Damon}
\address{Department of Mathematics\\
University of North Carolina\\\newline
Chapel Hill, NC 27599-3250\\USA}
\email{jndamon@math.unc.edu}
\urladdr{}
\numberwithin{equation}{section}
\def\cnewtheorem#1[#2]#3{\newtheorem{#1}{#3}[section]
\expandafter\let\csname c@#1\endcsname\c@Thm}
\theoremstyle{plain}
\newtheorem{Thm}{Theorem}[section]
\theoremstyle{definition}
\newtheorem{TitleExample}{Algorithm for decomposing medial axis into irreducible 
components}
\newcommand{\coker}{{\rm coker}\,}
\def \iti {\textit{i}}
\def \gP {\Pi}
\def \gG {\Gamma}
\def \cY {\mathcal{Y}}
\def \cT {\mathcal{T}}
\def \cB {\mathcal{B}}
\def \gl {\lambda}
\def \gL {\Lambda}
\def \ga {\alpha}
\def \gb {\beta}
\def \gevar {\varepsilon}
\def \gD {\Delta}
\def \gd {\delta}
\let\tsty\textstyle
\begin{document}

\begin{asciiabstract}
For compact regions Omega in R^3 with generic smooth boundary B, we
consider geometric properties of Omega which lie midway between their
topology and geometry and can be summarized by the term ``geometric
complexity".  The ``geometric complexity" of Omega is captured by its
Blum medial axis M, which is a Whitney stratified set whose local
structure at each point is given by specific standard local types.

We classify the geometric complexity by giving a structure theorem for
the Blum medial axis M.  We do so by first giving an algorithm for
decomposing M using the local types into "irreducible components" and
then representing each medial component as obtained by attaching
surfaces with boundaries to $4$--valent graphs.  The two stages are
described by a two level extended graph structure.  The top level
describes a simplified form of the attaching of the irreducible medial
components to each other, and the second level extended graph
structure for each irreducible component specifies how to construct
the component.

We further use the data associated to the extended graph structures to
compute topological invariants of Omega such as the homology and
fundamental group in terms of the singular invariants of M defined
using the local standard types and the extended graph structures.
Using the classification, we characterize contractible regions in
terms of the extended graph structures and the associated data.
\end{asciiabstract}

\begin{htmlabstract}
<p class="noindent">
For compact regions &Omega; in <b>R</b><sup>3</sup> with generic smooth boundary
<b>B</b>, we consider geometric properties of &Omega; which lie
midway between their topology and geometry and can be summarized by
the term &ldquo;geometric complexity&rdquo;.  The &ldquo;geometric complexity&rdquo; of
&Omega; is captured by its Blum medial axis M, which is a Whitney
stratified set whose local structure at each point is given by
specific standard local types.
</p>
<p class="noindent">
We classify the geometric complexity by giving a structure theorem for
the Blum medial axis M.  We do so by first giving an algorithm for
decomposing M using the local types into &ldquo;irreducible components&rdquo;
and then representing each medial component as obtained by attaching
surfaces with boundaries to 4&ndash;valent graphs.  The two stages are
described by a two level extended graph structure.  The top level
describes a simplified form of the attaching of the irreducible medial
components to each other, and the second level extended graph
structure for each irreducible component specifies how to construct
the component.
</p>
<p class="noindent">
We further use the data associated to the extended graph structures to
express topological invariants of &Omega; such as the homology and
fundamental group in terms of the singular invariants of M defined
using the local standard types and the extended graph structures.
Using the classification, we characterize contractible regions in
terms of the extended graph structures and the associated data.
</p>
\end{htmlabstract}

\begin{abstract}
For compact regions $\Omega$ in $\mathbb{R}^3$ with generic smooth boundary
$\mathcal{B}$, we consider geometric properties of $\Omega$ which lie
midway between their topology and geometry and can be summarized by
the term ``geometric complexity''.  The ``geometric complexity'' of
$\Omega$ is captured by its Blum medial axis $M$, which is a Whitney
stratified set whose local structure at each point is given by
specific standard local types.

We classify the geometric complexity by giving a structure theorem for
the Blum medial axis $M$.  We do so by first giving an algorithm for
decomposing $M$ using the local types into ``irreducible components''
and then representing each medial component as obtained by attaching
surfaces with boundaries to $4$--valent graphs.  The two stages are
described by a two level extended graph structure.  The top level
describes a simplified form of the attaching of the irreducible medial
components to each other, and the second level extended graph
structure for each irreducible component specifies how to construct
the component.

We further use the data associated to the extended graph structures to
express topological invariants of $\Omega$ such as the homology and
fundamental group in terms of the singular invariants of $M$ defined
using the local standard types and the extended graph structures.
Using the classification, we characterize contractible regions in
terms of the extended graph structures and the associated data.
\end{abstract}

\maketitle

\section*{Introduction}  \label{S:sec0} 
We consider a compact region $\Omega \subset \R^3$ with generic smooth boundary $\cB$.  We are 
interested in geometric properties of $\Omega$ and $\cB$ which lie midway between their topology 
and geometry and can be summarized by the term \lq\lq geometric complexity\rq\rq.  Our goal 
in this paper is first to give a structure theorem for the \lq\lq geometric complexity\rq\rq\  
of such regions.  Second, we directly relate this structure to the topology of the region
and deduce how the topology places restrictions on the geometric complexity and how the 
structure capturing the geometric complexity determines the topology of the region.  

To explain what we mean by geometric complexity, we first consider $\R^2$.  If $\cB$ is a 
simple closed curve as in \fullref{fig.1.3}\,(a), then by the Jordan Curve and Schoenflies 
Theorems, $\Omega$ in \fullref{fig.1.3}\,(b) is topologically a $2$--disk, and hence 
contractible.  Hence, the region in \fullref{fig.1.3}\,(b) is topologically simple; 
however, it has considerable \lq\lq geometric complexity\rq\rq.  It is this geometric 
complexity which is important for understanding shape features for both $2$ and $3$ 
dimensional objects in a number of areas such as computer and medical imaging, biology, etc.  

\begin{figure}[ht!]
\labellist
\small\hair 2pt
\pinlabel {$\rm (a)$} [tr] at 104 433
\pinlabel {$\rm (b)$} [tr] at 289 433
\endlabellist
\centerline{\includegraphics[width=11cm]{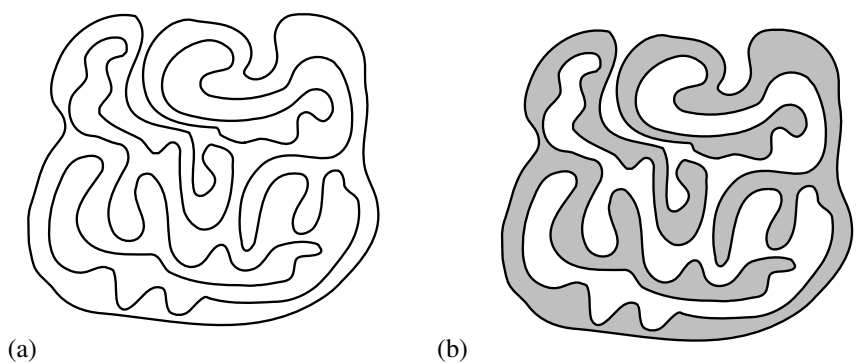}}
\caption{\label{fig.1.3}  Simple closed curve in (a) bounding a contractible  
region in $\R^2$ in (b)}
\end{figure}

This complexity is not captured by traditional geometrical invariants such as 
the local curvature of $\cB$ nor by global geometric invariants given by 
integrals over $\cB$ or $\Omega$.  Nor do traditional results such as the 
Riemann mapping theorem, which provides the existence of a conformal 
diffeomorphism of $D^2$ with $\Omega$, provide a comparison of the geometric 
complexity of $\Omega$ with $D^2$.  For $2$--dimensional contractible regions, 
a theorem of Grayson \cite{Gr}, building on the combined work of Gage and 
Hamilton \cite{Ga,GH}, shows that under curvature flow the boundary 
$\cB$ evolves so the fingers of the region shrink and the region ultimately 
simplifies and shrinks to a convex region which contracts to a \lq\lq round 
point\rq\rq.  Throughout this evolution, the boundary of the region remains 
smooth.  The order of shrinking and disappearing of subregions provides a 
model of the geometric complexity of the region.  Unfortunately, this 
approach already fails in $\R^3$, where the corresponding mean curvature 
flow may develop singularities, as in the case of the \lq\lq dumbell\rq\rq\, 
surface found by Grayson (also see eg Sethian \cite{Sn}).  

    The structure theorem for the \lq\lq geometric complexity\rq\rq\  
which we give is based on the global structure of the \lq\lq Blum medial 
axis\rq\rq\ $M$.  The medial axis is a singular space which encodes both the 
topology and geometry of the region (see \fullref{fig.1}).  It is defined in all 
dimensions and has multiple descriptions including locus of centers of 
spheres in $\Omega$ which are tangent to $\cB$ at two or more points (or have 
a degenerate tangency) as in Blum and Nagel \cite{BN}, the shock set for the eikonal/ \lq\lq 
grassfire\rq\rq\ flow as in Kimia, Tannenbaum and Zucker \cite{KTZ}, which is also a geometric flow on $\Omega$, 
and the Maxwell set for the family of distance to the boundary functions 
as in Mather \cite{M}.  It has alternately been called the central set by Yomdin \cite{Y}, and has as 
an analogue the cut-locus for regions without conjugate points in Riemannian 
manifolds.  

The multiple descriptions allow its local structure to be explicitly determined 
for regions in $\R^{n+1}$ with generic smooth boundaries: $M$ is an 
$n$--dimensional Whitney stratified set (by Mather \cite{M}) which is a strong 
deformation retract of $\Omega$.  The local structure of $M$ is given by a 
specific list of local models, by Blum and Nagel for $n = 1$, by Yomdin 
\cite{Y} for $n \leq 3$ (where it is called the central set) and Mather 
\cite{M} for $n \leq 6$, and is given a precise singularity theoretic 
geometrical description by Giblin \cite{Gb} for $n = 2$.  Results of Buchner 
\cite{Bu} show the cut locus has analogous properties for regions without 
conjugate points (and our structure theorem extends to this more general 
case).  Furthermore, by results in \cite{D1,D2,D3,D4}, 
we can derive the local, relative, and global geometry of both $\Omega$ and 
$\cB$ in terms of geometric properties defined on $M$.
 
\begin{figure}[ht!]
\labellist
\small\hair 2pt
\pinlabel $M$ at 289 646
\pinlabel $\Omega$ at 302 590 
\pinlabel $\cB$ [tl] at 468 627 
\pinlabel {$\rm cavity$} [l] at 407 682 
\endlabellist
\centerline{\includegraphics[width=9cm]{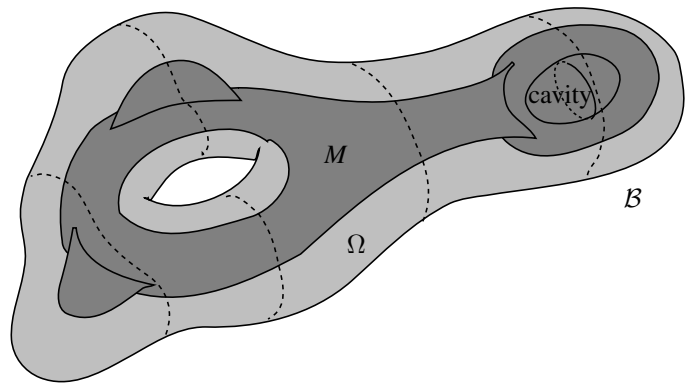}}
\caption{\label{fig.1}  Blum medial axis for a region in $\R^3$}
\end{figure}

For example, for generic compact regions in $\R^2$, $M$ is a 
$1$--dimensional singular space whose singular points either have 
$Y\!$--shaped branching or are end points.  This defines a natural graph 
structure with vertices for the branch and end points and edges 
representing the curve segments joining these points (see \fullref{fig.1a}).  
This graph structure encodes the geometric complexity of the region.  
Furthermore, $\Omega$ is contractible if and only if the graph is a tree.  Then 
the tree structure can be used to contract the region (also for computer 
imaging a tree structure is a desirable feature, as trees can be searched in 
polynomial time).  A computer scientist Mads Nielsen has asked whether in 
the case of contractible $\Omega \subset \R^3$, the Blum medial axis still has 
a \lq\lq tree structure\rq\rq.  We answer this question as part of the 
structure theorem for the medial axis $M$ for generic regions.  

\begin{figure}[ht!]
\labellist
\small\hair 2pt
\pinlabel $M$ at 213 660
\pinlabel $\Omega$ at 194 688 
\pinlabel $\cB$ [bl] at 282 718
\endlabellist
\centerline{\includegraphics[width=10cm]{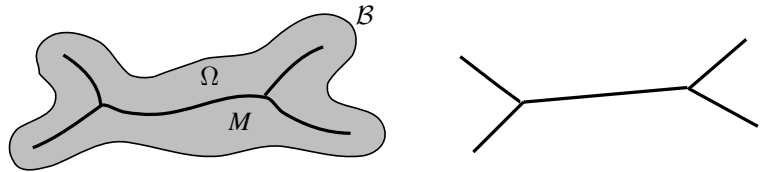}}
\caption{\label{fig.1a}  Blum medial axis for a region in $\R^2$ with 
associated graph structure}
\end{figure}

While the global structure of the medial axis as a Whitney stratified set with 
specific local singular structure does capture the geometric complexity of 
$\Omega$, it is insufficient in this raw form to characterize the geometric 
complexity or to see directly its relation with the topology.  This is what the 
structure theorem accomplishes.  

We give a brief overview of the form of the structure theorem (\fullref{Thm4.1}).  

\subsection*{The structure theorem for general regions} At the top level we decompose $M$ 
into \lq\lq irreducible medial components\rq\rq\ $M_i$ which are joined to each other along  
\lq\lq fin curves\rq\rq.  Geometrically this corresponds to taking connected sums of 
$3$--manifolds with boundaries.  A simplified form of the attaching is described by a {\it 
top level directed graph\/} $\Gamma(M)$.  We assign vertices representing the $M_i$, and 
directed edges from $M_i$ to $M_j$ for each edge component of $M_i$ attached to $M_j$ along a 
fin curve.  This is an \lq\lq extended graph\rq\rq \, in that there may be more than one edge 
between a pair of vertices or edges from a vertex to itself (see \fullref{fig.2}).  The 
resulting space obtained by attaching the $M_i$ as indicated by the edges of $\Gamma(M)$ is 
the simplified form $\hat M$ of the original $M$.  It is homotopy equivalent to $M$ and has 
the same irreducible components.  From the topological perspective, the decomposition is 
optimal in that the homology and fundamental group of $M$ are decomposed into direct sums, 
resp.\ free products, of those for the $M_i$.  Also, $M$ can be recovered from $\hat M$ by 
certain \lq\lq sliding operations along fin curves\rq\rq\ according to additional data defined 
from $M$.  

\begin{figure}[ht!]
\labellist
\small\hair 4pt
\pinlabel $M_p$ [tl] at 195 722 
\pinlabel $M_q$ [tl] at 315 720
\pinlabel {$M_{ir}$} [tl] at 418 719
\pinlabel $M_i$ [tl] at 121 629
\pinlabel $M_j$ [tl] at 263 626
\pinlabel $M_k$ [tl] at 331 627
\pinlabel $M_l$ [tl] at 452 623
\endlabellist
\centerline{\includegraphics[width=10cm]{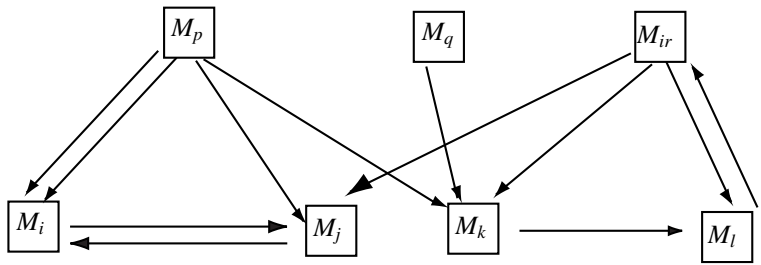}}
\caption{\label{fig.2} Graph structure given by decomposition into irreducible 
medial components with edges indicating the attaching along \lq\lq fin 
curves\rq\rq}
\end{figure}

At the second level, we describe the structure of each irreducible medial component $M_i$ by 
an extended graph $\gL(M_i)$, denoted more simply by $\gL_i$, as in \fullref{fig.2a}.  Here, 
there is an analogy with the resolution graph of an isolated surface singularity, whose 
vertices correspond to complex curves, ie compact orientable real surfaces, (with data the 
genera and self-intersection numbers) and edges indicating transverse intersection.  
Alternately, we can view the vertices as denoting multiply-punctured real surfaces which are 
attached to the set of intersection points as indicated by the edges.  For the irreducible 
medial components there is an analogous structure but with several added complications.  
 
The extended graph $\gL_i$ corresponding to an irreducible medial component $M_i$ has two 
types of vertices: \lq\lq $S$--vertices\rq\rq\, which are attached by edges to \lq\lq 
$Y\!$--nodes\rq\rq.  The $S$--vertices correspond to the medial sheets $S_{i j}$, which are 
(closures of) connected components of the set of smooth points of the component $M_i$ (which 
are not in general the connected components of the set of smooth points of $M$).  They are 
compact surfaces with boundaries (possibly nonorientable).  The $Y\!$--nodes correspond to 
connected components $\cY_{i j}$ of the \lq\lq $Y\!$--network\rq\rq\ $\cY_i$ of $M_i$.  The 
$Y\!$--network of $M_i$ is the collection of \lq\lq $Y\!$--branch curves\rq\rq\ together with 
vertices which are the \lq\lq $6$--junction points\rq\rq\ where $6$ sheets of the Blum medial 
axis come together in a point along $4$ $Y\!$--branch curves (and is described by a $4$--valent 
extended graph).  Each edge from a vertex for $S_{i j}$ to a node for $\cY_{i k}$ represents 
the attaching of the sheet $S_{i j}$ along one of its boundary components to the component 
$\cY_{i k}$.  

\begin{figure}[ht!]
\labellist
\small\hair 2pt
\pinlabel $M_i$ [t] at 113 685
\pinlabel $S_{i1}$ [bl] at 237 704
\pinlabel $S_{i2}$ [bl] at 288 704
\pinlabel $S_{ij}$ [bl] at 506 702
\pinlabel $\cY_{ik}$ [tl] at 325 634 
\pinlabel $\cY_{im}$ [tl] at 380 634
\pinlabel $\cY_{ip}$ [tl] at 431 634
\endlabellist
\centerline{\includegraphics[width=9cm]{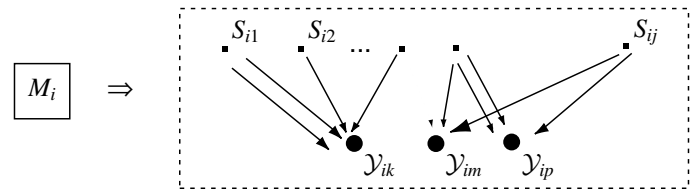}}
\caption{\label{fig.2a} Extended graph structure for irreducible medial 
component $M_i$: $S$--vertices $\bullet$ representing connected medial 
sheets $S_{i j}$ joined to $Y\!$--nodes $\scriptstyle\blacksquare$ representing 
components $\cY_{i k}$ of the \lq\lq  $Y\!$--network\rq\rq}
\end{figure}

There is associated data attached to the vertices and edges of $\gL_i$; 
namely, for $S$--vertices the genus, orientability, and number of medial edge 
curves on $S_{i j}$, for $Y\!$--nodes the $4$--valent extended graph associated 
to $\cY_{i k}$, and for each edge the attaching data of the boundary 
component of $S_{i j}$ to $\cY_{i k}$.  

From this structure we obtain two alternate ways to view $M_i$: either as 
obtained by attaching compact surfaces with boundaries along (some of the) 
boundary components or by an associated CW--decomposition.  These allow 
us to relate the graph structures and their associated data for all of the 
$M_i$ to the topological structure of $\Omega$.  

Topological invariants of $M$, and hence $\Omega$, are computed by \fullref{Thm3.3}  and \fullref{Thm3.5} in terms of $M_i$ and 
$\Gamma(M)$.  In turn, those of each $M_i$ are expressed in terms of the 
extended graph $\gL_i$, and the associated data.  This data includes the  
singular invariants of each $M_i$ such as the numbers of $6$--junction 
points, medial edge curves, components of $\cY_i$, the genera of $S_{i j}$, 
etc.  From this we define a single \lq\lq algebraic attaching map\rq\rq\ for 
$M_i$.  From this data, we compute in \fullref{Thm3.3} the topological 
invariants of $M_i$ such as homology groups and Euler characteristic.  We 
then deduce in \fullref{Thm3.5} the corresponding invariants for $M$ 
and hence, $\Omega$.  Also, in \fullref{Thm7.5} we give a presentation of 
$\pi_1(M_i)$ in terms of the associated data for $\gL_i$.  These provide 
topological bounds on the geometric complexity of $\Omega$.   

The answer to Nielsen\rq s question turns out in general to be no provided we wish to 
classify the medial axis up to homeomorphism.  The tree has to be replaced by an \fullref{DecmpAlg} we 
give in \fullref{S:sec1} for decomposing the medial axis into the irreducible components 
$M_i$.  The data of the algorithm requires more than just a graph showing which medial 
components are attached to which.  However, the simplified version $\hat M$, which sacrifices 
some of the detail of the attachings, is described by an extended graph $\gG(M)$, and for 
contractible regions $\gG(M)$ is a tree.  Then the special form which the structure theorem 
takes for contractible regions in $\R^3$ (\fullref{Thm5.1}) gives a complete 
characterization of contractible regions by the following conditions: the extended graphs 
$\gG(M)$ and the $\gL_i$ are trees, the medial sheets have genus $0$ (and so are topological 
$2$--disks with a finite number of holes) with at most one boundary curve representing a 
medial edge curve, a numerical \lq\lq Euler relation\rq\rq\ is satisfied which involves the 
basic medial invariants, and a fundamental group condition holds (\fullref{Thm5.0}) which 
involves a space formed from $\cY_i$ and $\gL_i$.  

The author is especially grateful to Mads Nielsen for initially raising the question about 
the tree structure for the contractible case, which led this investigator to these questions 
and results.  

\subsubsection*{Acknowledgements}  This research was partially supported by grants from the 
National Science Foundation DMS-0405947 and CCR-0310546 and a grant from DARPA.

\section{Decomposition into irreducible medial components}  
\label{S:sec1}
\subsection*{Generic local structure of Blum medial axis}
    We consider a region $\Omega \subset \R^3$ with generic smooth 
boundary $\cB$ and Blum medial axis $M$.  Then by Mather \cite{M}, $M$ can 
be viewed as the Maxwell set for the family of distance functions on $\cB$; 
hence, for generic $\cB$, it is a $2$--dimensional Whitney stratified set.  
Also, the generic local structure has one of the following local forms in \fullref{fig.4} (see eg \cite{Gb}, where Giblin gives a very explicit 
geometric description).  
  
\begin{figure}[ht!]
\labellist
\small\hair 2pt
\pinlabel {{\rm (a)\qua edge}} [t] at 71 638
\pinlabel {{\rm (b)\qua Y--branching}} [t] at 189 638
\pinlabel {{\rm (c)\qua fin creation point}} [t] at 332 638 
\pinlabel {{\rm (d)\qua ``$6$--junction''}} [t] at 474 638
\endlabellist
\centerline{\includegraphics[width=12cm]{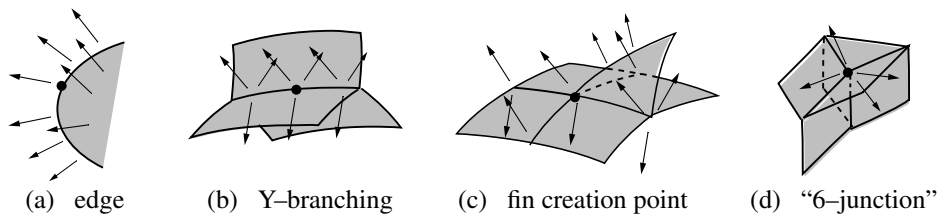}}
\caption{\label{fig.4} Local generic structure for Blum medial axes in $\R^3$ 
and the associated radial vector fields to points of tangency on the 
boundary}
\end{figure} 

Then $M$ consists of the following: (i) smooth connected ($2$--dimensional) strata; 
$1$--dimensional strata consisting of  (ii) $Y\!$--junction curves along which three strata meet 
in a $Y\!$--branching pattern and (iii) edge curves consisting of edge points of $M$; 
$0$--dimensional strata consisting of (iv) fin points and (v) $6$--junction points, where six 
medial sheets meet along with 4 $Y\!$--junction curves.  Connected components of $Y\!$--junction 
curves end either at fin points or $6$--junction points; while edge curves only end at fin 
points.  We refer to the union of $Y\!$--junction curves, fin points, and $6$--junction points 
as the \textit{initial $Y\!$--network} $\cY$.  

 In the simplest form we can view the medial 
axis as formed by attaching the connected $2$--dimensional smooth strata to the $1$--complex 
formed from the $Y\!$--branch and medial edge curves and the fin and $6$--junction points.  In 
fact, this approach misses a considerable amount of global structure for subspaces of $M$.  
To identify this larger structure we will decompose $M$ into \lq\lq irreducible medial 
components\rq\rq\ $M_i$ by \lq\lq cutting $M$ along fin curves\rq\rq.  Then we further 
decompose the irreducible medial components $M_i$ by representing them as obtained by 
attaching smooth medial sheets to the resulting $Y\!$--network formed from the union of 
$Y\!$--junction curves and $6$--junction points in $M_i$.  From this we will ultimately give a 
CW--decomposition to compute the topological invariants of $M$.  

\subsection*{Cutting the medial axis along fin curves}  
In order to proceed, we first explain how we cut along fin curves.  

 Suppose we have on 
the Blum medial axis a fin point $x_0$.  Then near $x_0$ we can distinguish the \lq\lq fin 
sheet\rq\rq\ which is the sheet that contains a medial axis edge curve ending at $x_0$.  We 
can begin following the $Y\!$--branch curve from $x_0$, while keeping track of the fin sheet 
which remains a connected sheet as we move along the curve.  Eventually one of two things 
must happen: either we reach a $6$--junction point or another fin point.  

 First, if the 
$Y\!$--branch curve meets a $6$--junction point, then because the fin sheet is locally connected 
near the $6$--junction point, we can follow the edge of the sheet as it continues through the 
$6$--junction point.  After the $6$--junction point we have identified both the corresponding 
continuation of the $Y\!$--branch curve, and the sheet.  We can do this for each $6$--junction 
point it encounters.  As $M$ is compact, eventually the $Y\!$--branch curve must meet another 
fin point.  We shall refer to the edge of the sheet from one fin point to the other as a 
\textit{fin curve}.  

 At the end of the fin curve, what was identified as the fin sheet 
(close to the $Y\!$--branch curve) from the beginning may or may not be the fin sheet for the 
end point.  If the sheet is a fin sheet at both ends, then we refer to the fin curve as being 
\lq\lq essential\rq\rq, while if it is only a fin sheet at one end, then we refer to the fin 
curve as \lq\lq inessential\rq\rq, (later discussion will explain the reason for these 
labels).  Examples of these are shown in \fullref{fig.7}.  

\begin{figure}[ht!]
\labellist
\small\hair 2pt
\pinlabel {(a)} [tr] at 36 655 
\pinlabel {(b)} [tr] at 307 655
\endlabellist
\centerline{\includegraphics[width=12cm]{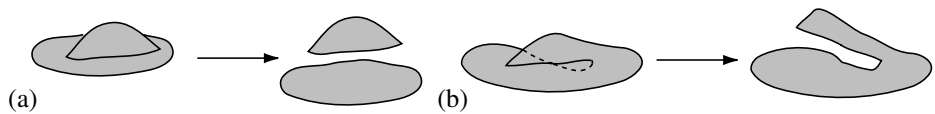}}
\caption{\label{fig.7} Two possibilities for fin curves on a medial sheet and 
the results of cutting along the fin sheets: (a) essential fin curve   (b)  
inessential fin curve}
\end{figure} 

An example of a region containing an inessential fin curve is given in \fullref{fig.8} and might be called a \lq\lq Mobius board\rq\rq, a surf board but 
with a \lq\lq Mobius band\rq\rq\  twist. 

\begin{figure}[ht!]
\centerline{\includegraphics[width=6cm]{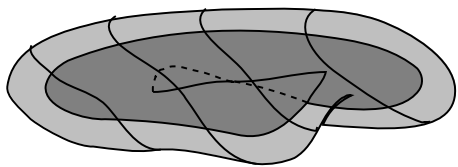}}
\caption{\label{fig.8} \lq\lq Mobius board\rq\rq\ with a \lq\lq inessential fin 
curve\rq\rq}
\end{figure} 

Then to cut along a fin curve beginning from a fin point $x_0$, we locally 
disconnect the fin sheet from the direction of $x_0$ by locally adding a point 
of closure to the fin sheet for each point of closure on the fin curve (as 
followed from $x_0$).  We must take care as a point on the fin curve can 
locally be a point of closure for more than one part of the sheet.  After this 
step, locally at an added closure point, the fin sheet is now a surface with 
piecewise smooth boundary; see \fullref{fig.7}.  If the fin curve is essential, 
then the fin sheet is now locally disconnected from the remaining two sheets 
still attached along the $Y\!$--junction curve.  While if the fin curve is 
inessential, the fin sheet is still attached at the other fin point to the 
remaining two sheets; again see \fullref{fig.7}\,(b).  

Then we can take the two remaining sheets still attached along that fin 
curve and smooth them to form a smooth sheet along the curve, with 
former $6$--junction points on the fin curve becoming $Y\!$--branch points 
(for another $Y\!$--branch curve).  

After having cut along the fin curves as described, the former points on the 
fin curve have become altered as follows:  
\begin{enumerate}
\item Fin points become points of an edge of the fin sheet.
\item  A former $Y\!$--branch point becomes a closure point on an edge of a fin 
sheet.
\item  A former $6$--junction point become a (topological) $Y\!$--network 
point.
\item  A $Y\!$--branch point on a base sheet becomes a (topological) 
$2$--manifold point of that sheet.  
\end{enumerate}  
Because of (4) above, the end result depends upon a further distinction for essential fin 
curves.  A  \textit{type--$1$ essential fin curve} will be one which only intersects other 
essential fin curves at $6$--junction points; otherwise, it shares a segment of $Y\!$--branch 
curve with another essential fin curve, and it will be \textit{type--$2$ essential fin curve} 
(see eg \fullref{fig.7b}).  If we cut along a type--$1$ essential fin curve, then the fin 
sheet becomes disconnected from the other sheets (at least along the curve) and this does not 
alter any other essential fin curve.  If we cut along a type--$2$ essential fin curve, then it 
will alter the structure of the other essential fin curves sharing a segment of $Y\!$--branch 
curve with it.  Hence, we can prescribe the following algorithm for decomposing the medial 
axis.  
\begin{TitleExample}\setobjecttype{TitEx}\label{DecmpAlg}~
\begin{enumerate}
\item Identify all type--$1$ essential fin curves and systematically cut along 
type--$1$ essential fin curves (it does not matter which order we choose as 
cutting along one does not alter the fin properties of another).  
\item After cutting along all type--$1$ essential fin curves, we may change 
certain inessential fin curves to type--$1$ essential ones.  If so return to step (1).   
\item There only remain type--$2$ essential fin curves and inessential fin 
curves.  Choose an essential fin curve and cut along it.  If a type--$1$ 
essential fin curve is created, return to step (1).  Otherwise, repeat this step 
until no essential fin curves remain.
\item  When there are no other essential fin curves, choose an inessential 
fin curve which crosses a $6$--junction point, and cut it from one side until 
we cut across one $6$--junction point.  
\item  Check whether we have created an essential fin curve.  If so then we 
cut along it, and repeat the earlier steps (1)--(3).  
\item If no essential fin curve is created, then we repeat step (3) 
until there are only inessential fin curves which do not cross 
$6$--junction points.  
\item  Finally we can cut each such remaining inessential fin curve, producing 
part of a smooth sheet as in \fullref{fig.7}\,(b).  
\item The remaining connected pieces are the \lq\lq irreducible medial 
components\rq\rq\ $M_i$ of $M$.  
\end{enumerate}
\end{TitleExample} 

\begin{Remark}
\label{Rem2.1a}
The distinct connected pieces created following steps (1) and (2) are intrinsic 
to $M$; while those created using steps (3) and (4) are not because choices 
are involved.  Which choices are made typically depends on the given 
situation and the importance we subjectively assign to how sheets are 
attached.  
\end{Remark}  
 
\begin{Example}
\label{Ex2.1b}
In \fullref{fig.7b}\,(a), we have a contractible medial axis with a pair of 
type--$2$ essential fin curves.  Depending on which essential fin curve we 
choose, $1$-$2$ or $3$-$4$, we choose to cut along in step (3), we obtain 
either (b) or (c), which leads to different attachings (and hence top level 
graph) for the irreducible medial components.  An alternate possibility would 
be to cut each fin sheet along the fin curves and view them as being 
attached partially along the edge of a fourth sheet. Again, the exact 
geometric form of $M$ may suggest one choice being preferred over the 
others. 
\end{Example}

\begin{figure}[ht!]
\labellist
\small\hair5pt
\pinlabel {(a)} [tr] at 67 577
\hair 2pt
\pinlabel $1$ [l] at 114 582 
\pinlabel $2$ [l] at 196 610
\pinlabel $3$ [r] at 86 589
\pinlabel $4$ [r] at 167 621
\pinlabel {(b)} <-3pt,-2pt> at 287 654
\pinlabel $1$ [tr] at 453 615
\pinlabel $2$ [l] at 543 645
\pinlabel $3$ [r] at 343 673
\pinlabel $4$ [tl] at 410 711
\pinlabel {(c)} <-3pt,7pt>  at 287 531 
\pinlabel $1$ [br] at 425 493 
\pinlabel $2$ [tl] at 507 524
\pinlabel $3$ [r] at 308 530 
\pinlabel $4$ [r] at 390 560 
\endlabellist
\centerline{\includegraphics[width=10cm]{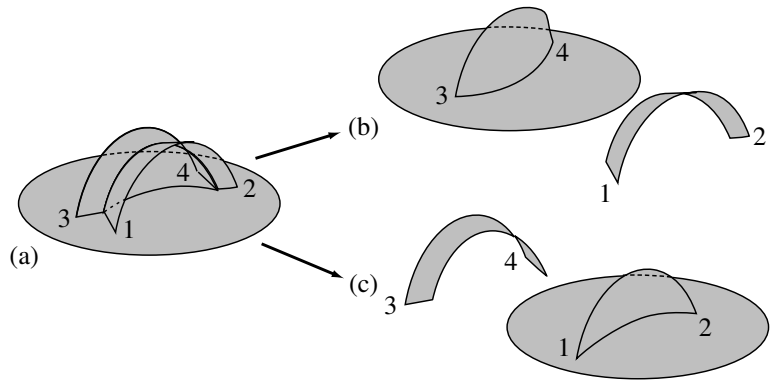}}
\caption{\label{fig.7b} Nonuniqueness of medial decomposition 
resulting from type--$2$ essential fin curves: (a) is a contractible medial axis 
with only type--$2$ essential fin curves, and (b) and (c) illustrate the results 
from cutting along the fin curves $1$-$2$ or $3$-$4$.}  
\end{figure} 

\begin{Example}
\label{Ex2.1a}
In \fullref{fig.7a}\,(a), we have a contractible medial axis with $10$ fin 
points $1$-$10$, and all fin curves are inessential.  Depending on how we 
choose cuts in step (4) of the algorithm, we can end up with $1$, $2$, or $3$ 
irreducible medial components.  

If we cut from $4$ through the first $6$--junction point, then $3$-$6$ 
becomes an essential fin curve, and we cut away the fin sheet $M_1$ as in  
\fullref{fig.7a}\,(b).   Then further cutting from $8$ through the first 
$6$--junction point, we create another essential fin curve $2$-$9$.  
Cutting along 
it creates a second fin sheet $M_2$. The remaining inessential fin curves 
$1$-$4$, $5$-$7$, and $8$-$10$ can be contracted to points on edges of the 
third sheet $M_3$.  Each of these $3$ medial sheets are then irreducible 
components.  

Alternatively, after the first cut, we could have instead cut from $9$, and 
then from $4$ again, and then only inessential fin curves remain without  
$6$--junction points, so they contract to a second sheet, and we only obtain 
two irreducible components.  Thirdly, we could have begun cutting from $7$, 
then $8$, and then $4$ twice  and we would obtain only a single medial sheet 
with inessential fin curves, leading to a single irreducible component.  
\end{Example}

\begin{figure}[ht!]
\labellist
\small\hair 2pt
\pinlabel {(a)} [tr] at 49 397 
\pinlabel $1$ [t] at 82 499
\pinlabel $2$ [r] at 88 517
\pinlabel $3$  at 144 546
\pinlabel $4$ [tr] at 148 510
\pinlabel $5$ [br] at 84 448
\pinlabel $6$ [l] at 144 450
\pinlabel $7$ [l] at 166 430
\pinlabel $8$ [b] at 161 523
\pinlabel $9$ [l] at 181 504
\pinlabel $10$ [b] at 208 491
\pinlabel {(b)} [tr] at 275 397
\pinlabel $1$ [t] at 313 484
\pinlabel $2$ [r] at 319 503
\pinlabel $3$ [b] at 486 520
\pinlabel $4$ [tl] at 359 485
\pinlabel $5$ [br] at 314 435
\pinlabel $6$ [l] at 496 434
\pinlabel $7$ [l] at 397 419
\pinlabel $8$ [b] at 391 510
\pinlabel $9$ [l] at 410 489
\pinlabel $10$ [b] at 440 476
\pinlabel $M_1$ [l] at 503 472
\endlabellist
\centerline{\includegraphics[width=10cm]{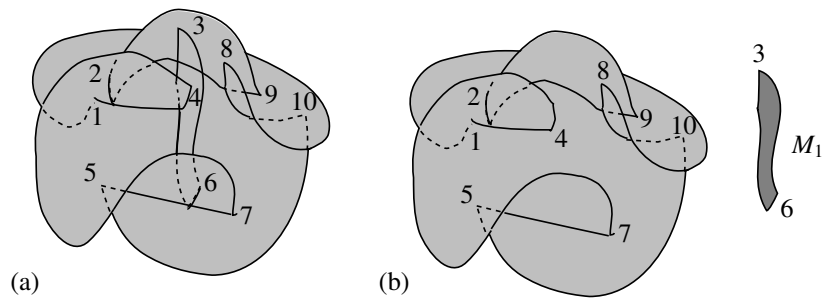}}
\caption{\label{fig.7a} Nonuniqueness of medial decomposition 
resulting from inessential fin curves: (a) is a contractible medial axis with 
only inessential fin curves, and (b) illustrates the cutting of irreducible medial 
component $M_1$ after first cutting from fin point $4$.}
\end{figure} 

\subsection*{Constructing the medial axis by attaching irreducible medial 
components along fin curves} 

To reverse the algorithm and reconstruct $M$ from the $M_i$ requires the 
following additional attaching data:  
\begin{enumerate}
\item the list of which segments of medial edge curves which will be 
attached as either essential or inessential fin curves
\item  the order in which the sheets will be attached  
\item  the curve at the $j$--th stage to which the attaching will be made as a 
fin curve to obtain the $(j+1)$--st stage.
\end{enumerate}
The curves in step (3) may cross multiple components, and need only be well- defined up to 
isotopy; however, this isotopy is for the space obtained at the $j$--th stage, whose singular 
set must be preserved by the isotopy.  Hence for example, in \fullref{fig.8c}, even though 
the medial axis is contractible, if the attaching curve were on the same side of the two 
holes then the resulting spaces would not be homeomorphic.  

\begin{figure}[ht!]
\labellist
\small\hair 2pt
\pinlabel {(a)} [tr] at 63 617
\pinlabel {(b)} [tr] at 297 617 
\pinlabel $M_1$ [t] at 408 617 
\pinlabel $M_2$ [br] at 316 681
\pinlabel $M_3$ [bl] at 526 685
\pinlabel $M_4$ [br] at 422 717
\endlabellist
\centerline{\includegraphics[width=10cm]{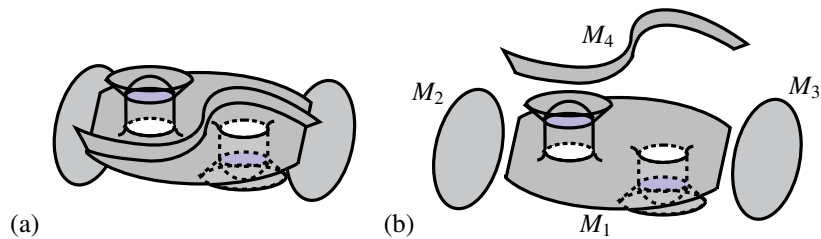}}
\caption{\label{fig.8c} (a) A contractible medial axis, with attaching of  
component $M_4$ to multiple components $M_1$, $M_2$, $M_3$\quad (b) 
Irreducible medial components for (a)}
\end{figure}  

\section{Extended graph structure and medial decomposition}  
\label{S:sec2} We next turn to a simplified version of $M$ using the same irreducible medial 
components, but with simplified attaching.  The resulting simplified version of $M$, which we 
denote by $\hat M$, will still be homotopy equivalent to $M$.  To define $\hat M$, we 
consider an alternate way to understand the topological effects of attaching along a fin 
curve $\gamma$.  Instead, we isotope the attaching map along the support of the original 
$\gamma$, by sliding and shrinking so it now becomes a fin curve $\gamma^{\prime}$  which no 
longer passes through any $6$--junction points (which have disappeared after the isotopy as 
in \fullref{fig.6}).  

\begin{figure}[ht!]
\centerline{\includegraphics[width=10cm]{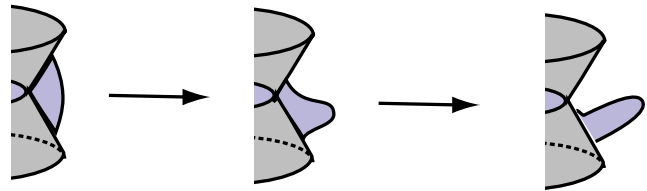}}
\caption{\label{fig.6} Isotoping a fin curve so it misses $6$--junction points}
\end{figure} 

We can achieve this by replacing $M$ by a subcomplex which is a strong 
deformation retract, and only differs from $M$ in a small neighborhood of 
the fin curve.  
\begin{Lemma}[Isotopy lemma for fin curves]
\label{Lem2.1} Let $\gamma$ be a fin curve in $M$.  Then there is a Whitney stratified set 
$M^{\prime} \subset M$, which is a strong deformation retract of $M$ and which only differs 
from $M$ in a given neighborhood of $\gamma$.  Furthermore, in that neighborhood, $\gamma$ 
has been replaced by a fin curve $\gamma^{\prime}$ which does not meet a $6$--junction point.  
\end{Lemma}
\begin{proof}
We obtain $M^{\prime}$ as the result of a series of deformation retractions 
$M \supset M^{(1)} \supset M^{(2)} \supset  \dots \supset M^{(k)} =  
M^{\prime}$.   Here $M^{(j)} \supset M^{(j+1)}$ corresponds to either 
moving the fin point along a $Y\!$--branch curve so it is in a neighborhood of a 
$6$--junction point where it has the normal form as in \fullref{fig.7c}\,(a), 
or when $M^{(j)}$ already has this form then $M^{(j+1)}$ is a deformation 
retraction across a $6$--junction point as in \fullref{fig.7c}\,(b).  

    In the first case we may construct the deformation because $M$ is 
analytically trivial along a $Y\!$--branch curve and so analytically a product in a 
neighborhood of a compact segment of a $Y\!$--branch curve.  In the second 
case, we may use the normal form for $6$--junction points to deform along 
the shown region. 
\end{proof}

\begin{figure}[ht!]
\labellist
\small\hair 2pt
\pinlabel {(a)} [tr] at 40 653 
\pinlabel {(b)} [tr] at 327 653
\endlabellist
\centerline{\quad \includegraphics[width=12cm]{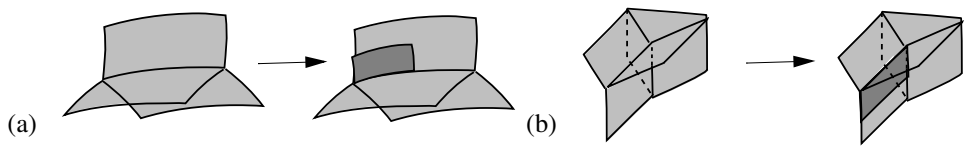}}
\caption{\label{fig.7c} (a)  Deformation retraction along a $Y\!$--branch curve.
\quad 
(b) Deformation retraction across a $6$--junction point.  In both cases 
the deformation is across the dark region.}
\end{figure} 

We view this process as sliding and shrinking the fin curve along its support, 
across $6$--junction points until the fin curve no longer crosses 
$6$--junction points.  Also, after isotoping the fin curves, the resulting spaces are 
still homotopy equivalent.  

After first isotoping the fin curves, we can more clearly see the topological 
effect of cutting along fin curves.  
\begin{Lemma}
\label{Lem2.2} In the preceding situation, let $\gamma^{\prime}$ be a fin curve which does 
not meet a $6$--junction point.  There are two possibilities.  

{\rm(1)}\qua   Suppose the fin curve $\gamma^{\prime}$ is essential, with fin sheet 
$S_i$ at both ends.  Then when we cut $S_i$ along $\gamma^{\prime}$, the 
sheet locally becomes disconnected from the remaining sheets of the fin 
curve as in \fullref{fig.7}\,(a).  

{\rm(2)}\qua  If instead the fin curve $\gamma^{\prime}$ is inessential, then we cut all 
three sheets along the fin curve; we obtain a single sheet with boundary edge 
containing the fin curve as in \fullref{fig.7}\,(b).  This may be alternately 
obtained by shrinking the fin curve to a point.  \end{Lemma}
\begin{proof}
In the essential case, by the triviality of $M$ along $Y\!$--branch sheets, the fin 
sheet remains a single sheet along the fin curve.  When we cut it we 
disconnect it from the base sheet as in \fullref{fig.7}\,(a).  By contrast, in 
the inessential case when we cut the fin sheet, the edge curve continues 
along the cut fin curve, joining up with another edge curve again.  Hence, the 
fin points become edge points for a single edge curve.  

As an alternate way to view the process, the fin sheet for one fin point 
becomes part of the base sheet.  Then we may follow a path around the 
second fin point to end up on the base sheet for the first fin point.  Then we 
may continue the curve around the first fin point to be on the sheet which 
becomes the fin sheet for the second fin point.  If we extend these curves 
so they intersect the edge curve, then together they form a closed curve, 
so that when the fin curve is contracted to a point we obtain a 
$2$--disk, one edge of which is the edge curve.  
\end{proof}

\begin{Remark}
\label{Rem2.3}
This Lemma contrasts cutting along essential versus inessential fin curves.  
When we cut along the essential fin curve we disconnect the fin sheet from 
the base sheet, causing a change in homotopy type as in \fullref{fig.7}\,(a).  
By contrast, for inessential fin curves, when we contract the fin curve to a 
point $p$, the sheet then has an edge curve which passes through $p$.  In 
this case, the \lq\lq fin curve\rq\rq\ can be eliminated, and the pair of fin 
points cancelled, without any change in the homotopy type of $M$.  
\end{Remark}

Now beginning with a Blum medial axis, we can repeat the  
\fullref{DecmpAlg} given in \fullref{S:sec1}, except at each step, instead of 
cutting along the fin curve, by the Isotopy Lemma for Fin Curves, we slide 
the fin sheet along the fin curve.  We pass all of the $6$--junction points 
(which then become $Y\!$--branch points), until the fin curve lies on a single 
smooth sheet.  Then by \fullref{Lem2.2}\,(b), we may contract the 
inessential fin curves eliminating their fin points.  The resulting space with 
these simplified attachings is $\hat M$.  Finally we cut the essential fin 
curves using \fullref{Lem2.2}\,(b).  The resulting connected 
components $M_i$ are again the irreducible medial components of $M$.  
 
The initial $Y\!$--network has been altered by the removal of the fin curves to 
yield the $Y\!$--network $\cY$.  Then $\cY = \smash{\tsty\bigcup_i \cY_i}$, where each 
$\cY_i$ is the resulting $Y\!$--network for $M_i$.  Then $\hat M$ is obtained 
from the irreducible medial components by attaching their edges which came 
from essential fin curves to the isotoped positions in some $M_j$.  There is 
some ambiguity in this construction in addition to that coming from the 
algorithm.  We make a choice of the medial sheet that the fin curve passes 
through.  However, the irreducible medial components and the homotopy type 
of $\hat M$ will remain the same.  

This gives rise to a \lq\lq top level directed extended graph\rq\rq\, 
$\Gamma(M)$ whose vertices correspond to the $M_i$, with an edge going 
from $M_i$ to $M_j$ for each segment of an edge curve of $M_i$ attached 
to $M_j$ along a fin curve.  

There is the following relation between the topology of the full medial axis 
$M$ and that of the irreducible medial components $\{ M_i\}_{i = 1}^r$ 
and the top level extended graph $\Gamma(M)$.
\begin{Proposition}
\label{Prop2.4}
In the preceding situation
\begin{enumerate}
\item   for any coefficient group $G$, 
$$   H_j(M; G) \qua \simeq  \qua  H_j(\Gamma(M); G) \,\,  \oplus \,  
(\oplus_{i =1}^r \, H_j(M_i; G)  \quad \mbox{ for all } j > 0;   $$
\item  $   \pi_1(M) \qua  \simeq  \qua  \pi_1(\Gamma(M)) \,\, * \, (*_{i 
=1}^r  \, \pi_1(M_i))$.
\end{enumerate}
\end{Proposition}

Of course the graph $\Gamma(M)$ only contributes to homology in dimension 
$1$.  Although it is an extended graph, we can compute its fundamental 
group just as for graphs.  To prove this we also need the next proposition.
\begin{Proposition}
\label{Prop2.5}
If $\Gamma$ is a nonempty connected extended graph, then it contains a 
maximal tree $T$.  Then $\pi_1(\Gamma)$ is a free group with one 
generator for each edge (including loops) of $\Gamma$ which do not belong 
to $T$.  
\end{Proposition}
\begin{proof}[Proof of \fullref{Prop2.5}]
First, we construct a  connected graph $\Gamma^{\prime}$ from 
$\Gamma$.  For each pair of vertices of $\Gamma$ joined by an edge we 
remove all but one of the edges; as well we remove any loops (edges from a 
vertex to itself).  What remains is a connected graph $\Gamma^{\prime}$.  
Then such a graph has a maximal connected tree $T$ (by eg Spanier \cite[Chapter 
2]{Sp}).  This is also a maximal connected tree for $\Gamma$.  

There is only one special case which occurs for $Y\!$--networks, and hence which we must allow. 
It is a single $S^1$ without any vertex.  Then we have to artificially introduce a vertex so 
$S^1$ becomes a loop on the vertex.  Then the maximal tree is just the introduced vertex.  
However, we emphasize that we will not count such \lq\lq artificial vertices\rq\rq \, for 
later numerical relations involving the number of vertices.

Then by repeating the same proof in \cite[Chapter 2]{Sp} as for the 
fundamental groups of graphs, $\pi_1(\Gamma)$ is a free group with one 
generator for each edge (including loops) of $\Gamma$ which do not belong 
to $T$.
\end{proof}
We obtain as an immediate corollary. 
\begin{Corollary}
\label{Cor2.6}
If $M$ is contractible, then so is each $M_i$ contractible, and furthermore, 
$\Gamma(M)$ is a tree.  
\end{Corollary}
\begin{proof}[Proof of \fullref{Cor2.6}]
  If $M$ is contractible, then $M$ is connected, $\pi_1(M) = 0$, and $H_j(M) = 
0$ for all $j > 0$.  Thus,  by (2) of \fullref{Prop2.4},  
$\pi_1(\Gamma(M)) = 0$; thus, $\Gamma(M)$ is a tree.  In addition, for each 
$i$, by the combination of (1) and (2) of \fullref{Prop2.4}, 
$\pi_1(M_i) = 0$, and $H_j(M_i) = 0$ for all $j > 0$.  Also, by definition, each 
$M_i$ is connected.  Then by the Hurewicz Theorem, $\pi_j(M_i) = 0$ for all 
$j \geq 0$.  However, $M_i$ is a CW--complex, so by another Theorem of 
Hurewicz, $M_i$ is contractible.  
\end{proof}
\begin{proof}[Proof of \fullref{Prop2.4}]
By our earlier discussion, $M$ is homotopy equivalent to the space $\hat M$ 
obtained by attaching $M_i$ to $M_j$ along an edge segment of $M_i$ to a 
smooth sheet of $M_j$.  However, this edge segment can be homotoped to a 
point so $M$ is instead homotopy equivalent to the space obtained by 
attaching an external curve segment  $\ga_{i j}$ from $x_{i j}$ on the edge 
of $M_i$ to a point $y_{i j}$ in a smooth sheet of $M_j$.  Then from one 
fixed point $x_{i 0}$ on $M_i$ we can choose disjoint curves $\gb_{i j}$ from 
$x_{i 0}$ to the other points $x_{i j}$.  Then we choose a maximal tree $T$ 
in $\Gamma(M)$ and choose the attaching curves $\ga_{i j}$  corresponding 
to the edges of $T$, as well as the curves $\gb_{i j}$ in each $M_i$, which 
give a tree from each $x_{i 0}$.  The resulting space $X$ which is the union 
of these trees and the $M_i$ is homotopy equivalent to the pointed union of 
the $M_i$.  Hence, 
$$  \pi_1(X) \,\, \simeq \,\,  *_{i =1}^r  \, \pi_1(M_i)  \qquad \mbox { and } 
\qquad  H_j(X) \,\, \simeq \,\,  \oplus_{i =1}^r  \, H_j(M_i) \,\, \mbox{ for } 
j > 0.   $$
Then as in the computation of $\pi_1$ of a graph as in \cite[Chapter 
2]{Sp}, we inductively add one of the remaining edges and show using 
Seifert--Van Kampen that we are taking a free product with a free group on 
one generator.  After $k$ steps, where $k$ is the number of additional edges 
(including loops) of $\Gamma(M)$ not in $T$, we obtain that $\pi_1(M)$ is 
isomorphic to the free product of $\pi_1(X)$ with a free group on $k$ 
generators, which is exactly the fundamental group of $\Gamma(M)$.  

For the case of homology we instead use Mayer--Vietoris in a similar argument.
\end{proof}
\fullref{Prop2.4} is valid for any region $\Omega$.  Thus, we have a 
relation between the topology of  $M$, and hence $\Omega$ (as $M$ is a strong 
deformation retract of $\Omega$), and the individual irreducible medial 
components $M_i$ and the top level extended graph $\Gamma(M)$.  In the 
special case that $\Omega$ is contractible, then so also is $M$ and each $M_i$, 
and $\Gamma(M)$ is a directed tree. 

\begin{Example}[Knot complement regions]
\label{Ex2.7}
The simplest example of contractible $\Omega$ corresponds to boundary $\cB 
\simeq S^2$.  A slightly more complicated example would have $\cB$ 
consisting of two components $S^2$ and a torus $T^2$.  In the case that 
$T^2$ belongs to the compact region bounded by $S^2$ in $\R^3$, then 
$\Omega$ is the region inside $S^2$ and outside of the torus $T^2$.  This is a 
\lq\lq knot complement region\rq\rq.  

If $K \subset \R^3$ is a knot, then the complement $C = \R^3 \backslash 
K$ is not a region in our sense.  However, we can take a small tube 
$T_\gevar$ around the knot and a large sphere $S^2_R$ containing the tube.  
Then the region $\Omega = D^2_R \backslash \mbox{int}(T_\gevar)$ is a 
compact region with boundary $S^2_R \cup \partial T_\gevar$.  Also, $\Omega 
\subset C$ is a strong deformation retract.  For a generic knot with $\gevar 
> 0$ and $R$ sufficiently large, the Blum medial axis of $\Omega$ will be generic.  

To see what form the irreducible medial components $\{ M_i\}$ take for 
such a knot complement region, we note a few results from knot theory.  
First, by Alexander duality, $H_i(\Omega) \simeq \Z$ for $i < 3$ and $0$ 
otherwise.  Also, $\pi_1(C)$ does not split as a free product of nontrivial 
groups.  Hence, by \fullref{Prop2.4} there are just three 
possibilities: (i) $\pi_1(\gG(M)) \neq 0$ so all $M_i$ are simply connected; or 
$\gG(M)$ is a tree and all but one of the $M_i$, say $M_1$, are simply 
connected; and then either (ii) $H_2(M_1) \simeq \Z$, or (iii) $H_2(M_1) = 0$ 
and for a single other $M_i$, say $M_2$, $H_2(M_2) \simeq \Z$.  

In the first case, we must have $\pi_1(C) \simeq \pi_1(\gG(M)) \simeq \Z$, 
which implies $K$ is the unknot.  In this case, all but one of $M_i$, say 
$M_1$, have trivial reduced homology and hence are contractible, while 
$M_1$ is simply connected with $0$ reduced homology except for $H_2(M_1) 
\simeq \Z$.  Then $\gG(M)$ is homotopy equivalent to $S^1$, $M_1$ is 
homotopy equivalent to $S^2$, and $M$ (and hence $C$) is homotopy 
equivalent to the pointed union $S^1 \vee S^2$  (which is homotopy 
equivalent to a standard homotopy model for the complement of the unknot, 
the torus with a disk attached to one of the generators of $\pi_1$).  

In the second case, $\gG(M)$ is a tree and all of the other $M_i$ except 
$M_1$ have trivial fundamental group and reduced homology, and hence are 
contractible.  There is then exactly one noncontractible irreducible medial 
component $M_1$, and $C$ is homotopy equivalent to it.  

In the third case, all but $M_1$ and $M_2$ are contractible, $M_2$ is 
homotopy equivalent to $S^2$, and $\gG(M)$ is a tree.  Then $C$ is 
homotopy equivalent to $M_1 \vee M_2  \simeq  M_1 \vee S^2$.  

In all of these cases, all but one or two $M_i$ are contractible and so 
contribute nothing to the topology of the region but do contribute to its 
geometric complexity.  Next, we shall determine the structure of the 
$M_i$ and relate this to their topology.
\end{Example}

\section{Structure of irreducible medial components}  
\label{S:sec3}  Next, we decompose each irreducible medial component $M_i$, which is 
connected and locally has only $Y\!$--junction points, edge points, and $6$--junction points 
(but no fin points).  We may apply the local closure procedure used in \fullref{S:sec1} to cut 
the sheets of $M_i$ along the remaining $Y\!$--network as in \fullref{fig.5}.  We do this by 
locally adding closure points to each of the three sheets meeting at a point of the 
$Y\!$--network, or to each of the $6$ sheets meeting at $6$--junction points.  This disconnects 
the connected components of the set of smooth points of $M_i$ from the $Y\!$--network, turning 
them into compact connected surfaces with piecewise smooth boundaries.  We refer to each such 
compact surface with boundary $S$ as a \textit{medial sheet} of $M_i$.  The data attached to 
each medial sheet consists of the triple $(g, o, e)$.  Here $g$ is the genus of $S$ (after 
attaching disks to the boundary components), $o = 1$ or $0$ corresponding to whether $S$ is 
orientable or nonorientable (a nonorientable surface with at least one boundary component can 
be embedded in $\R^3$), and $e$ denotes the number of boundary components of $S$ (ie $S^1$\rq 
s) which are medial edge curves of $M$ (keeping in mind that part of the edge might have 
originally been an essential fin curve).  

 Then each $M_i$ is formed by attaching 
certain of the boundary components of medial sheets in $M_i$ to the $Y\!$--network $\cY_i$ of 
$M_i$, reversing the cutting process above.  The $Y\!$--network $\cY_i$ has connected 
components $\smash{\{ \cY_{i k}\}_{k =1}^{c_i}}$, and we denote the collection of medial sheets of 
$M_i$ by $\smash{\{ S_{i j}\}_{j =1}^{s_i}}$.

\begin{figure}[ht!]
\labellist
\small\hair 2pt
\pinlabel $M$ at 138 684
\pinlabel $\Omega$ at 161 713
\pinlabel $M_2$ at 298 695
\pinlabel {$S_{21}$} [r] at 300 627  
\pinlabel {$S_{11}$} [r] at 371 679
\pinlabel {$S_{13}$} [r] at 375 607
\pinlabel $M_1$ [b] at 406 728
\pinlabel {$S_{12}$} at 411 633
\pinlabel {$\cY_{11}$} [tl] at 450 626
\pinlabel $M_3$ at 528 696
\pinlabel {$S_{31}$} [l] at 526 626 
\endlabellist
\centerline{\includegraphics[width=12cm]{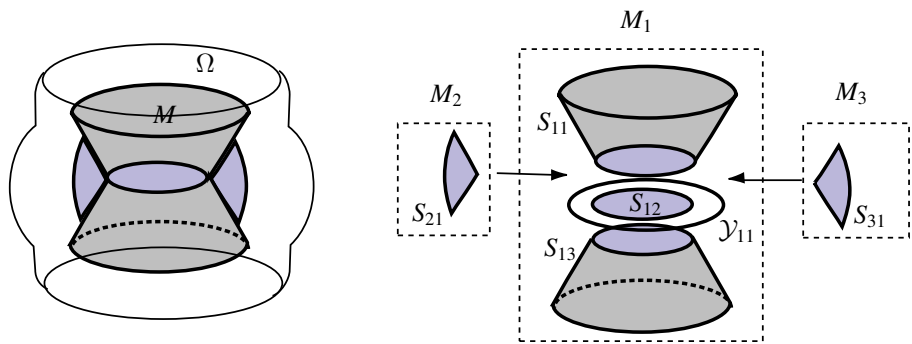}}
\caption{\label{fig.5} Cutting the medial axis first along fin curves and then 
along the $Y\!$--network $\cY$}
\end{figure} 

We assign an extended graph $\gL_i$ to each such irreducible medial component $M_i$ as 
follows.   The graph will have two type of vertices: to each sheet $S_{i j}$ will be 
associated an $S$--vertex, and to each connected component $\cY_{i k}$ we associate a vertex 
which we refer to as a $Y\!$--node.  We will associate an edge from the $S$--vertex associated 
to $S_{i j}$ to the $Y\!$--node corresponding to $\cY_{i k}$, for each boundary component of 
$S_{i j}$ (ie an $S^1$) which is attached to $\cY_{i k}$.   

 Each medial sheet $S_{i j}$ 
has the associated data $(g, o, e)$.  Second, each connected component $\cY_{i k}$ of the 
$Y\!$--network $\cY_i$ can be described by an extended graph $\gP_{i k}$.  Its vertices 
correspond to the $6$--junction points of $\cY_{i k}$, and there are distinct edges between 
vertices which correspond to the distinct $Y\!$--junction curves of $\cY$ joining the 
corresponding $6$--junction points.  Because a $6$--junction point has exactly four 
$Y\!$--junction curves ending at it, each vertex of the graph will have four edges.  Thus, each 
vertex will have a valence of $4$, and we will refer to such an extended graph as a \lq\lq 
$4$--valent extended graph\rq\rq.   

\begin{figure}[ht!]
\labellist
\small\hair 2pt
\pinlabel $M$ [r] at 72 700 
\pinlabel $\cY$ [r] at 69 630
\pinlabel {$\tilde \gP$} [r] at 72 582
\pinlabel $4$ [l] at 489 578
\endlabellist
\cl{\includegraphics[width=11cm]{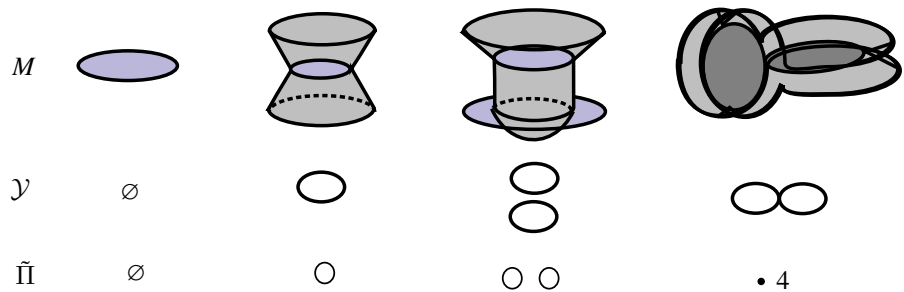}}
\caption{\label{fig.9} Simplest examples of irreducible medial components, 
their $Y\!$--networks and the reduced network graphs}
\end{figure} 

A basic result we need for $4$--valent extended graphs is a consequence of 
the following result for $m$--valent extended graphs.

\begin{Proposition}
\label{Prop3.1}
If $\gP$ is a nonempty connected $m$--valent extended graph with $k$ 
vertices, then ($m$ or $k$ must be even and) $\pi_1(\gP)$ is a free group 
on $\frac{k}{2}\cdot (m - 2) + 1$ generators.  
\end{Proposition}

Hence, for a nonempty connected $4$--valent graph $\gP$ with $k$ 
vertices,  $\pi_1(\gP)$ is a free group on $k + 1$ generators.
\begin{proof}
An extended graph $\gP$ is still a $1$--complex.  Hence, $\pi_1(\gP)$ is a 
free group on $\ell$ generators, where $\ell = \mathrm{rk} H_1(\gP)$.  If V, 
respectively E, denotes the number of vertices, respectively edges, of 
$\gP$, then by the $m$--valence of $\gP$, $2E = mV$.  As $V = k$, 
%%%\begin{equation}
%%%\begin{aligned}
\begin{align*}
\ell \quad &= \quad 1 - \chi(\gP) \quad = \quad 1 - (V - E)   \\
&= \quad 1 - (k - \frac{k}{2}\cdot m) 
\end{align*}
 %%% \end{aligned}
 %%% \end{equation} 
which equals the value claimed.
\end{proof}  

\subsection*{Associated weighted graphs of extended graphs}
In order to work with an extended graph $\Gamma$, we simplify the description by defining an 
associated weighted graph $\tilde \Gamma$.   $\tilde \Gamma$ will have the same vertices as 
$\Gamma$; however, for each pair of vertices with at least one edge between them, we remove 
all but one of the edges.  Likewise, we remove all loops.  Then we assign to each vertex 
$v_i$  the number $2\ell$ where there are $\ell$ loops at $v_i$.  Also for the edge between 
vertices $v_i$ and $v_j$, we assign the integer $m$ which is the number of edges in $\Gamma$ 
between $v_i$ and $v_j$.  Then the valence at $v_i$ is $2\ell + \sum m_i$ where we sum $m_i$ 
over the edges of $\tilde \Gamma$ which end at $v_i$. If for a vertex the number of loops is 
$0$ then we suppress it; likewise if there is only $1$ edge between two vertices, we usually 
suppress the $1$.  For extended graph $\gP_{i k}$ we refer to this graph $\tilde \gP_{i k}$ 
as the \textit{reduced $Y\!$--network graph} of $\cY_{i k}$.  For such a reduced graph, at a 
vertex the sum of the vertex number and edge numbers always equals $4$.  There are very few 
possibilities for vertices as shown in \fullref{fig.11}

\begin{figure}[ht!]
\labellist
\small\hair 2pt
\pinlabel $4$ [l] at 89 620
\pinlabel $4$ [b] at 192 703 
\pinlabel $3$ [b] at 187 660
\pinlabel $2$ [b] at 193 615
\pinlabel $2$ [r] at 164 614
\pinlabel $2$ [b] at 335 705
\pinlabel $2$ [b] at 385 705
\pinlabel $2$ [b] at 359 662
\pinlabel $2$ [b] at 350 614
\endlabellist
\centerline{\includegraphics[width=12cm]{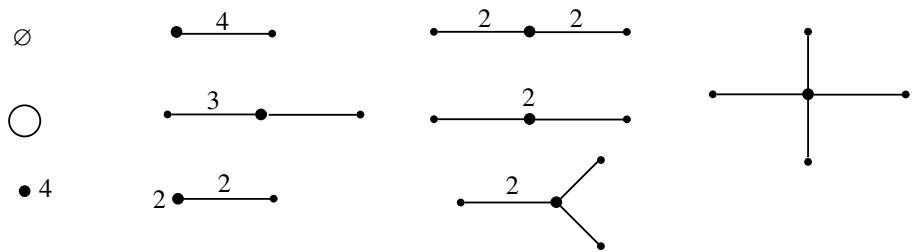}}
\caption{\label{fig.11} The possible local vertex structures (at the enlarged 
vertex) for reduced $Y\!$--network graphs.  The case of $S^1$ is the 
exceptional case of nonempty $Y\!$--network without any vertices.}
\end{figure} 

Finally, we assign this data associated to $M_i$ to the graph $\gL_i$ as 
follows.  The data of $S_{i j}$ will be attached to its $S$--vertex, the graph 
$\gP_{i k}$ (or its reduced graph $\tilde \gP_{i k}$) assigned to the 
$Y\!$--node of $\cY_{i k}$, and the topological attaching data will be assigned to the 
edge from $S_{i j}$ to $\cY_{i k}$.  

Next, just as we did for $M$, we compute the topological invariants of each 
irreducible medial component $M_i$.  This time it will be in terms of the 
extended graph $\gL_i$ and the associated data.  
\subsection*{Associated numerical data for irreducible medial components}

The numerical invariants can be divided into those that are invariants of the graph $\gL_j$, 
and those which are given by the associated data attached to the vertices and edges of 
$\gL_j$.  In general, for an invariant $I_j$ of the medial component $M_j$, we let $I = 
\sum_j I_j$, summed over the irreducible medial components of $M$, be the associated global 
invariant of $M$.  The only exception is our definition of $\nu(M)$ given by \eqref{Eqn3.1b}.  
We summarize these invariants in Table 1.

\subsubsection*{Graph-theoretic numerical data of $\gL_j$}

The key invariants associated to $\gL_j$ are:  the number $s_j$ of $S$--vertices,  the 
number $c_j$ of $Y\!$--nodes and the first Betti number $\gl_j$ of $\gL_j$ (as a topological space).  
 
We can express other invariants such as the (reduced) Euler characteristic 
in terms of these.  For example, the $\mbox{ number of edges of } \gL_j = 
s_j + c_j + \gl_j -1$.  We introduce another purely graph-theoretic invariant 
which appears in computing $\chi(M_j)$: 
\begin{equation}
\label{Eqn3.1a}
  \nu(\gL_j) \quad = \quad s_j - c_j - \gl_j. 
\end{equation}
Using this we also introduce the global graph-theoretic invariant of $M$:
\begin{equation}
\label{Eqn3.1b}  \nu(M) \quad = \quad  \sum_j \nu(\gL_j) \, -  \, \gb_1
\end{equation}
where the sum is over the irreducible medial components of $M$ and $\gb_1$ 
is first Betti number of $\gG(M)$ (as a topological space and $= -\tilde 
\chi(\gG(M))$).
\subsubsection*{Numerical associated data of $\gL_j$}
Second, we list the numerical data defined from the associated data. First of all, an 
$S$--vertex is associated to $S_{j k}$, which is a compact surface with boundary.  The 
boundary will have connected components which are either edge curves or are attached to the 
$Y\!$--network $\cY_j$.  We let $e_{j k}$ denote the number which are medial edge curves of 
$M_j$. For each connected component $\cY_{j k}$ of the $Y\!$--network, we let $v_{j k}$ denote 
the number of vertices in $\gP_{j k}$.  This is the number of $6$- -junction points in the 
$Y\!$--network $\cY_j$.  

\begin{table}[ht!]
\label{Table1} 
\begin{tabular}{lll}
Invariants of   &  Global invariants   &   Description of medial \\ 
 medial components   & of medial axis      &   component invariants  \\[1ex]
Graph invariants   &      &   \\ [1ex]
$s_j$    &  $s$   &  number $S$--vertices of $\gL_j$ =    \\
       &       &   number medial sheets of $M_j$    \\[0.5ex]
$c_j$  &  $c$  &  number $Y\!$--nodes of $\gL_j$  \\
           &             &   = number connected   \\
           &            &     components of $\cY_j$  \\[0.5ex]
$\gl_j$  &  $\gl$   &  $\mathrm{rk}\, H_1(\gL_j)$  \\[0.5ex]
  $\nu(M_j)$  & $\nu(M)$    &   $\nu(M_j)$ given by \eqref{Eqn3.1a}; \\
      &        & $\nu(M)$ given by \eqref{Eqn3.1b} \\[1.5ex]
Graph data  &      &   \\
invariants  &      &   \\ [1ex]
$v_j$    &  $v$    &  number vertices of $\Pi_j$  =    \\
       &       &   number $6$--junction points of $M_j$    \\[0.5ex]
$e_j$    &  $e$     &  number medial edge curves of $M_j$  \\[0.5ex]
$G_j$    &  $G$   &  total weighted genus of  $M_j$ \eqref{Eqn3.2d} \\[0.5ex]
$q_j$    &  $q$    &  sum of $q_{j k}$ given by \eqref{Eqn3.2b} \\
$Q_j$    &  $Q$   &   first Betti number of associated  \\
       &        &    $1$--complex $ \mathcal{S}\cY_j^{\prime} \subset M_j$      \\
       &        &        
\end{tabular} 
\centerline{Table 1:  Invariants $I_j$ of Medial Components $M_j$ and their 
Global Versions} 
\centerline{ $I$ for the medial axis, where $I = \sum_j I_j$.}
\end{table}

Next, $S_{j k}$ has a genus in the sense that if we attach a $2$--disk to each boundary 
component, then we obtain a compact surface (without boundary).  By the classification of 
surfaces, if it is orientable, it is homeomorphic to a connected sum of tori and the number 
is its genus; while if the surface is nonorientable, then it is a connected sum of real 
projective planes, and that number is the genus.  In either case we denote the genus by $g_{j 
k}$, and refer to it as the genus of $S_{j k}$.  We define the \textit{weighted genus of 
$S_{j k}$} to be 
\begin{equation}
\label{Eqn3.2a}
 \tilde g_{j k} \quad = \quad \Bigg\{ 
\begin{array}{ll} 
2g_{j k} &\mbox{ if $S_{j k}$ is orientable}  \\ 
g_{j k}  &\mbox{ if $S_{j k}$ is nonorientable} \end{array}  
\end{equation}
Then the \textit{total weighted genus of $M_j$}
\begin{equation}
\label{Eqn3.2d}
 G_j \, \, \overset{\text{def}}{=} \,\, \sum_k \tilde g_{j k} \quad   \mbox{ summed 
over all medial sheets  $S_{j k}$ of $M_j$}. 
\end{equation}
From the weighted genus, we define $q_{j k}$ as follows.  
\begin{equation}
\label{Eqn3.2b}
 q_{j k} \quad = \quad \Bigg\{ 
\begin{array}{ll} 
\tilde g_{j k} + e_{j k} -1  & \mbox{  if  } e_{j k} > 0  \\ 
\tilde g_{j k}  & \mbox{  if  } e_{j k} = 0 \end{array}  
\end{equation}
Note this is the first Betti number of a $1$--complex in $S_{j k}$ which will 
be used in Sections \ref{S:sec6} to \ref{S:sec8} to compute the homology and 
fundamental group of $M_j$.
Then we define 
$$ q_j = \sum q_{j k} \quad \mbox{summed over all medial sheets $S_{j k}$ 
in $M_j$.}   $$ Although $s_j$, the number of medial sheets in $M_j$, is a graph theoretic 
invariant, there are subsets of $S$--vertices whose definition depends on associated data.  
Of these the most important is $s_{0 j}$, \textit{the number of sheets without edge curves}.  
This is a sum $s_{0 o, j} + s_{0 n, j}$, where $s_{0 o, j}$, resp.\ $s_{0 n, j}$, denotes the 
number of orientable, resp.\ nonorientable, sheets without medial edge curves.  

Then by \eqref{Eqn3.2a}, \eqref{Eqn3.2d} and \eqref{Eqn3.2b}, there is the 
following relation between these invariants:
\begin{equation}
\label{Eqn3.2c}
 q_j \, \, = \,\, G_j \, + \,  e_j \, -\, (s_j - s_{0 j})  
\end{equation}

\subsection*{Topological invariants of irreducible medial components}
We next describe how to determine the Euler characteristic, homology, and fundamental group 
of $M_j$.  In \fullref{S:sec8}, we introduce a \lq\lq minimal $1$--complex\rq\rq\ 
$\mathcal{S}\cY_j^{\prime}$ constructed from the singular data of $M_j$.  It consists of three 
contributions:  the graph structure $\gL_j$, the $Y\!$--network $\cY_j$, and a $1$--skeleton 
from the  medial sheets.  The $\mathrm{rk} H_1(\mathcal{S}\cY_j^{\prime})$  is given by $Q_j = \gl_j + (v_j + 
c_j) + q_j$, which is the sum of the three contributions.  Then in \fullref{S:sec8}, we 
define an \textit{algebraic attaching homomorphism}
$$  \Psi_j \co \Z^{s_{0 j}} \to \Z^{Q_j}.$$
  From this homomorphism we can compute the homology of $M_j$ in terms 
of the singular data.  Second, we shall also define in \fullref{S:sec7} a set of 
generators and relations to compute the fundamental group of $M_j$.  These 
provide two ingredients for the topology of $M_j$.  
\begin{Thm}
\label{Thm3.3}
Suppose that $M_j$ is an irreducible medial component.  Then there are the 
following properties in homology:
\begin{enumerate}
\item $M_j$ has torsion free homology.
\item  The reduced Euler characteristic of $M_j$ is given by
\begin{equation*}
\tilde \chi(M_j)  \quad  = \quad s_{0 j} - Q_j.
\end{equation*}
Alternately, it can be written in terms of the extended graph $\gL_j$, and 
associated data of $\gL_j$:  
\begin{equation}
\label{Eqn3.3b}
\begin{split}
\tilde \chi(M_j)  \quad  &= \quad \nu (\gL_j)\,  - \, ( G_j + e_j +  v_j)   
 \\
&= \quad s_j  - \, (e_j +  v_j +  c_j +  G_j + \gl_j).
\end{split}
\end{equation}
\item  The homology groups $H_2(M_j)$, respectively $H_1(M_j)$, are the 
kernel and cokernel of the algebraic attaching homomorphism $\Psi_j$; see 
\eqref{Eqn8.11}.  Furthermore, there are bounds
\begin{equation*}
\mbox{rk}\,(H_2(M_j; \Z )) \leq s_{0 o, j} \quad \mbox{ and } \quad q_j \leq 
\mbox{rk}\,(H_1(M_j; \Z )) \leq Q_j - s_{0 n, j}.
\end{equation*} 
In addition, there are the properties for the fundamental group:
\item There is a continuous map $\psi_j \co M_j \to \gL_j$ such that the 
induced map $\pi_1(M_j) \to \pi_1(\gL_j)$  is surjective.
\item  The fundamental group has a presentation by generators and 
relations 
$$ \pi_1(M_j) \quad \simeq \quad F_{Q_j}/ \langle \{r_{j i} \} \rangle $$
where the relations $r_{j i}$ are given by \eqref{Eqn7.5b}.
\end{enumerate}
\end{Thm}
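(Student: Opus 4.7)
The plan is to replace $M_j$ by a homotopy-equivalent CW-complex built from the extended graph $\gL_j$ and its associated data, and then read off $\chi$, $H_*$ and $\pi_1$ directly. Each $\cY_{jk}$ is already a $4$-valent graph, each medial sheet $S_{jk}$ with boundary deformation-retracts onto a $1$-complex of rank $q_{jk}$, and attaching these pieces according to $\gL_j$ produces a CW-model whose $1$-skeleton has first Betti number $Q_j = \gl_j + (v_j + c_j) + q_j$ (the three summands contributed by $\gL_j$, $\cY_j$, and the sheet-skeletons respectively), and whose $2$-cells are in bijection with the closed-boundary sheets: one for each of the $s_{0j}$ such sheets.

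For (2), cell counting immediately gives $\tilde\chi(M_j) = s_{0j} - Q_j$. The alternative form in \eqref{Eqn3.3b} follows by substituting \eqref{Eqn3.2c} for $q_j$. Independently, one may compute from $\chi(M_j) = \chi(\sqcup_k S_{jk}) + \chi(\cY_j) - \chi(A_j)$, where $A_j$ is the disjoint union of attached boundary circles: \fullref{Prop3.1} gives $\chi(\cY_j) = -v_j$, each $\chi(S_{jk}) = 2 - \tilde g_{jk} - b_{jk}$, $\chi(A_j) = 0$, and connectedness of $\gL_j$ gives $\sum_k b_{jk} = e_j + (s_j + c_j + \gl_j - 1)$, yielding $\tilde\chi(M_j) = \nu(\gL_j) - (G_j + e_j + v_j)$.

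For (3), the cellular boundary map $\partial_2 \co \Z^{s_{0j}} \to \Z^{Q_j}$ coincides with the algebraic attaching homomorphism $\Psi_j$ constructed in \fullref{S:sec3}, so $H_2(M_j) = \ker\Psi_j$ and $H_1(M_j) = \coker\Psi_j$; the rank bounds then follow from the block structure of $\Psi_j$ on the orientable versus non-orientable summands of $\Z^{s_{0j}}$ (non-orientable generators inject as image vectors, hence cannot contribute to the kernel, giving the $s_{0o,j}$ bound on $\mathrm{rk}\,H_2$ and the $Q_j - s_{0n,j}$ bound on $\mathrm{rk}\,H_1$, while the sheet-skeleton summand that survives in the cokernel accounts for the lower bound $q_j \leq \mathrm{rk}\,H_1$). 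The main obstacle is claim (1), torsion-freeness: $H_2$ is automatically torsion-free as a subgroup of $\Z^{s_{0j}}$, but for $H_1 = \coker\Psi_j$ one must show that $\mathrm{image}\,\Psi_j$ is a saturated subgroup of $\Z^{Q_j}$. Orientable closed-boundary sheets give visibly primitive image vectors; the delicate case is a non-orientable closed-boundary sheet $S_{jk}$, whose boundary cycle is abstractly twice a primitive class in $H_1$ of $S_{jk}$'s own $1$-skeleton. Naively this would produce $\Z/2$-torsion (as for the inclusion of a M\"obius band into $\mathbb{RP}^2$), but the embedding $M_j \subset \R^3$ rules out such $\mathbb{RP}^2$-type attachments, and generically forces the image vector of $\Psi_j$ to be primitive.

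Finally, for (4) and (5), define $\psi_j \co M_j \to \gL_j$ by collapsing each sheet and each $Y$-component onto its vertex and retaining the attaching edges as segments; non-tree edges of $\gL_j$ lift to loops through attaching circles, giving surjectivity of $\pi_1(M_j) \to \pi_1(\gL_j)$. A presentation of $\pi_1(M_j)$ is then obtained by iterated Seifert--van Kampen along a spanning tree of $\gL_j$: each $\cY_{jk}$ contributes $v_{jk} + 1$ free generators by \fullref{Prop3.1}, each bounded sheet contributes $q_{jk}$ free generators, and the $\gl_j$ non-tree edges of $\gL_j$ contribute $\gl_j$ more, giving the free group $F_{Q_j}$; each closed-boundary sheet then contributes its defining surface relation, producing the relations $r_{ji}$ of \eqref{Eqn7.5b} and the presentation $\pi_1(M_j) \simeq F_{Q_j}/\langle\{r_{ji}\}\rangle$.
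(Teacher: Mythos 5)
Most of your outline runs parallel to the paper's own route (Sections \ref{S:sec6}--\ref{S:sec8}): the CW--model obtained by attaching one $2$--cell per sheet without edge curves to a $1$--complex of rank $Q_j$, the identification of the cellular boundary with $\Psi_j$, the cell count $\tilde\chi(M_j)=s_{0j}-Q_j$, and the Seifert--Van Kampen presentation of $\pi_1(M_j)$ are all essentially the paper's argument; your inclusion--exclusion computation of $\chi(M_j)$ from $\chi(\sqcup_k S_{jk})$, $\chi(\cY_j)=-v_j$ and the edge count of $\gL_j$ is a correct and slightly more direct variant of the paper's derivation of \eqref{Eqn3.3b}.

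The genuine gap is part (1), torsion-freeness, and it propagates into part (3). The paper does not prove (1) from the CW--structure at all: it argues globally that $H_*(M_j)$ is a direct summand of $H_*(M)$ by \fullref{Prop2.4}, that $H_*(M)\simeq H_*(\Omega\setminus\cB)$ is a direct summand of $H_*(\R^3\setminus\cB)$, and then applies Alexander duality together with the fact that $\cB$ is a compact \emph{orientable} surface. Your substitute --- that the embedding in $\R^3$ ``rules out $\mathbb{RP}^2$-type attachments and generically forces the image vector of $\Psi_j$ to be primitive'' --- is an assertion, not an argument: nothing in the generic local structure of the medial axis yields it, and the danger is real, since $H_1(M_j,\mathcal{S}\cY_j)$ genuinely contains $(\Z_2)^{s_{0n,j}}$, so torsion is not excluded by the cell structure alone. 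Moreover, even if each image vector of $\Psi_j$ were primitive, the cokernel could still have torsion, because primitivity of individual generators does not make the image a saturated subgroup of $\Z^{Q_j}$ (already $(1,0)$ and $(1,2)$ in $\Z^2$ have $\Z/2$ cokernel). So some global input such as the duality argument is indispensable here.

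The same issue undermines your justification of the lower bound $q_j\le \mathrm{rk}\,H_1(M_j;\Z)$. In the paper this bound is obtained from the exact sequence of $(M_j,\mathcal{S}\cY_j)$, whose relative $H_1$ is $\Z^{q_j-s_{0n,j}}\oplus(\Z_2)^{s_{0n,j}}$, by tensoring with $\Z/2$ and \emph{using torsion-freeness} to convert the mod $2$ count into a rank; it is therefore a consequence of (1), not independent of it. Your claim that ``the sheet-skeleton summand survives in the cokernel'' is not correct as stated: for a nonorientable sheet without edge curves the image vector of $\Psi_j$ does hit the sheet-skeleton generators (with coefficient $2$), and the crude estimate $\mathrm{rk}(\mathrm{im}\,\Psi_j)\le s_{0j}$ only gives $\mathrm{rk}\,H_1\ge Q_j-s_{0j}$, which can be strictly smaller than $q_j$ (eg a single closed orientable sheet with $\cY_j=\emptyset$). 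Parts (2), (4), (5) and the upper bounds in (3) are fine as sketched.
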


\begin{proof}
For (1), it is sufficient to show $H_*(M)$ is torsion free; for $H_*(M_j)$ is a 
direct summand by \fullref{Prop2.4}.  Then since $H_*(M) \simeq 
H_*(\Omega_0)$ where $\Omega_0 = \Omega \backslash \cB$, it is sufficient to show 
$H_*(\Omega_0)$ is torsion free.  We may decompose $\R^3 \backslash \cB = 
\Omega_0 \cup \Omega^{\prime}$ as a disjoint union.  Thus, $H_*(\Omega_0)$ is a 
direct summand of $H_*(\R^3 \backslash \cB)$.  However, by Alexander 
duality $H_j(\R^3 \backslash \cB) \simeq H^{2-j}(\cB)$ for $j > 0$.  Since 
$\cB$ is a compact orientable surface, $H^*(\cB)$ is torsion free, giving the 
conclusion.  

The derivation of the formula for the Euler characteristic, and the 
computation of the homology in terms of the algebraic attaching 
homomorphism are given in \fullref{S:sec8}.  

The computation of the fundamental group is given in \fullref{S:sec7}, using 
the CW--decom\-position of $M_j$ in terms of the singular structure given in 
\fullref{S:sec6}.  
\end{proof}
These results provide the following restrictions on geometric complexity.  
\begin{Corollary}
\label{Cor3.4}
Consider an irreducible medial component $M_j$.  

First, suppose $H_1(M_j) = 0$. Then we have the following: 
\begin{enumerate}
\item  The graph $\gL_j$ is a tree.
\item  Each medial sheet $S_{j k}$ has genus $0$, ie it is a $2$--disk with a 
finite number of holes.  
\item  At most one of the boundary components of $S_{j k}$ is an edge 
curve of $M_j$.  
\item  Suppose, in addition, $H_2(M_j) = 0$. Then $e_j$ is the number of 
medial sheets with edge curves, and the following \lq\lq Euler Relation\rq\rq\ 
holds between the number of sheets without edge curves (LHS) and the 
(RHS), which is $\mathrm{rk}\, H_1(\cY_j)$, a topological invariant of the $Y\!$--network 
\begin{equation}
\label{Eqn3.4}
         s_j \, - \, e_j \quad = \quad v_j  \, + \, c_j.
\end{equation}  
\item  If $M_j$ is simply connected (without requiring $H_2(M_j) = 0$), then 
$M_j$ will satisfy the fundamental group relation given in  \fullref{Thm5.0}.  
\item  Finally, if $M_j$ is contractible, then all of the preceding hold.
\end{enumerate}
\end{Corollary}

The derivation of the Corollary from \fullref{Thm3.3} will be given in 
\fullref{S:sec8}

Finally, we combine the results of \fullref{Thm3.3} with the decomposition in \fullref{Prop2.4} to compute the \textit{homology and reduced Euler characteristic of $M$ from 
the singular invariants measuring geometric complexity}.   We let 
$$\Psi \,\, = \,\, \oplus_j \Psi_j \, \co \Z^{s_0} \to \Z^Q, $$
where $s_0 = \sum_j s_{0 j}$ is the total number of medial sheets in all of 
the $M_j$ without edge curves (and $Q = \sum_j Q_j$).  
\begin{Thm}
\label{Thm3.5}
Let $M$ be the Blum medial axis of a connected region $\Omega$ with smooth 
generic boundary $\cB$.  Then $H_{*}(\Omega) \simeq H_{*}(M)$ have the 
following properties.
\begin{enumerate}
\item
$M$ (and $\Omega$) have torsion free homology.
\item
$ H_2(M; \Z) \simeq \ker (\Psi) \quad \mbox{ and } \quad  H_1(M; \Z) 
\simeq \coker (\Psi) \oplus H_1(\Gamma(M); \Z)$.
\item  $
\mbox{rk}(H_2(M; \Z )) \,\, \leq \,\, s_{0 o} \quad \mbox{ and } \quad 
q + \gb_1 \,\, \leq \,\, \mbox{rk}(H_1(M; \Z )) \,\, \leq \,\,
Q - s_{0 n} + \gb_1 ,
$
where $s_{0 o}$, resp.\ $s_{0 n}$ denote the number of orientable, resp.\ nonorientable, 
medial sheets of $M$ without edge curves and as before $\gb_1 = \mbox{rk}(H_1(\Gamma(M); \Z 
))$.
\item  The reduced Euler characteristic is given by
\begin{equation*}
\tilde \chi(M) \quad=\quad\nu (M)\,  - \, ( G + e +  v)\quad =\quad s - 
\, (e +  v + c + G + \gb)
\end{equation*}
where $\gb = \gb_1 + \sum_j \gl_j$.
\item
In the case $\Omega$ (and hence $M$) is contractible, $G = 0$, $\gb = 0$, and 
we have the relation
\begin{equation*}
s - e \quad = \quad c +  v.
\end{equation*}
\end{enumerate}
\end{Thm}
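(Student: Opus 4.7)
The plan is to derive each statement of \fullref{Thm3.5} by combining \fullref{Prop2.4}, which decomposes $H_*(M)$ and $\pi_1(M)$ into contributions from $\gG(M)$ and from the irreducible components $M_j$, with \fullref{Thm3.3}, which supplies the corresponding data for each $M_j$ individually. The graph $\gG(M)$ is itself a $1$--complex, so its homology is torsion-free, concentrated in degrees $0$ and $1$, and of rank $\gb_1$ in degree $1$; this makes its contribution to every sum easy to isolate.

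For (1), torsion freeness of $H_*(M)$ is immediate from \fullref{Prop2.4}(1): each $H_*(M_j)$ is torsion-free by \fullref{Thm3.3}(1), and $H_*(\gG(M))$ is torsion-free. For (2), specializing \fullref{Prop2.4}(1) to dimensions $1$ and $2$ and using $H_2(\gG(M)) = 0$ yields $H_2(M) \simeq \bigoplus_j H_2(M_j) \simeq \bigoplus_j \ker(\Psi_j) = \ker(\Psi)$ and $H_1(M) \simeq H_1(\gG(M)) \oplus \bigoplus_j \coker(\Psi_j) \simeq H_1(\gG(M)) \oplus \coker(\Psi)$, where the identifications on each $M_j$ come from \fullref{Thm3.3}(3). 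The bounds in (3) then follow by summing the per-component bounds of \fullref{Thm3.3}(3) and adding the rank $\gb_1$ contributed by $H_1(\gG(M))$ to $H_1(M)$ (while $H_2$ receives no contribution from $\gG(M)$).

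For (4), I would use additivity of the reduced Euler characteristic under the direct-sum decomposition of \fullref{Prop2.4}(1): since $\tilde\chi(\gG(M)) = -\gb_1$, one obtains $\tilde\chi(M) = -\gb_1 + \sum_j \tilde\chi(M_j)$. Substituting the formula from \fullref{Thm3.3}(2) for each $\tilde\chi(M_j)$ and applying the definitions \eqref{Eqn3.1a} and \eqref{Eqn3.1b} of $\nu(\gL_j)$ and $\nu(M)$ produces the first form; rewriting $\nu(\gL_j) = s_j - c_j - \gl_j$ and collecting with $\gb = \gb_1 + \sum_j \gl_j$ gives the second. For (5), if $\Omega$ is contractible then so is $M$, and \fullref{Cor2.6} forces each $M_j$ to be contractible and $\gG(M)$ to be a tree, so $\gb_1 = 0$; \fullref{Cor3.4} then gives $g_{jk} = 0$ for every medial sheet (hence $G = 0$) and $\gL_j$ a tree ($\gl_j = 0$), so $\gb = 0$, and setting $\tilde\chi(M) = 0$ in the formula from (4) yields $s - e = c + v$. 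The main obstacle is not this synthesis but the deferred construction of the algebraic attaching maps $\Psi_j$ and the proof of \fullref{Thm3.3} carried out in \fullref{S:sec7} and \fullref{S:sec8}; given those, \fullref{Thm3.5} is essentially a bookkeeping consequence of \fullref{Prop2.4}.
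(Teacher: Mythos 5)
Your proposal is correct and follows essentially the same route as the paper, which proves \fullref{Thm3.5} precisely by combining the sum formulas of \fullref{Prop2.4} with the per-component results of \fullref{Thm3.3} (the paper's own proof is a one-line reduction to exactly this bookkeeping). The only cosmetic difference is in (1), where the paper actually establishes torsion-freeness of $H_*(M)$ first (via Alexander duality, inside the proof of \fullref{Thm3.3}) and deduces it for the $M_j$ as direct summands, whereas you run the implication in the opposite direction; both are valid given the stated results.
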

\begin{proof}
The proof follows from $M$ being a strong deformation retract of $\Omega$ 
(see eg \cite{D1}), using the sum formula given in \fullref{Prop2.4}, and substituting the formulas for $H_i(M_j; \Z )$ and $\tilde 
\chi (M_j)$ given in \fullref{Thm3.3}. 
\end{proof}
\begin{Example}
\label{Ex3.6}   The simplest irreducible medial component $M_i$ has $\cY_i = 
\emptyset$.  Then $M_i$ is a compact connected surface with boundary
so $s_i = 1$ and $e_i = $ number of boundary components.  The
$Y\!$--network invariants $c_i = v_i = 0$ and the graph $\gL_i$
consists of a single $S$--vertex so $\gl_i = 0$.  Then $G_i = Q_i =
q_i$ is the rank of $H_1(M_i)$, and $\nu (M_i) = 1$.  If $e_i = 0$
then $M_i$ must be orientable, $s_{0 i} = 1$, and $\Psi_i$ is the zero
map; while if $e_i > 0$ then $s_{0 i} = 0$ and $\Psi_i$ is again the
zero map.
\end{Example}
\begin{Example}
\label{Ex3.7} 
The next simplest case has $\cY_i = \circ$.  Already there arises a question 
of which nonintersecting embedded surfaces can be the medial sheets.  The 
easiest case is when the medial sheets are $2$--disks and annuli (examples in 
\fullref{fig.1}, \fullref{fig.5} and the first three examples of \fullref{fig.9}).  However, an example exists of a single medial sheet which is 
annulus with a single medial edge curve to form a \lq\lq collapsed version of 
the umbilic bracelet\rq\rq\ as in \fullref{fig.13a}.  
\end{Example}

\section{General structure theorem}  
\label{S:sec4} We can now state the full structure theorem for the Blum medial axis.  and in 
the next section state the form it takes for contractible $\Omega$.  

\begin{Thm}[General structure theorem]
\label{Thm4.1}  Suppose $\Omega \subset \R^3$ is a compact connected region with generic smooth 
boundary $\cB$ and Blum medial axis $M$.  Then  there is associated to $\Omega$ a two level 
graph structure.  
\begin{enumerate}
\item  At the top level $\Gamma(M)$ is a directed extended graph.  It 
consists of vertices corresponding to the irreducible medial components $M_i$ of $M$ and 
directed edges from $M_i$ to $M_j$ corresponding to each connection attaching an edge of 
$M_i$ to $M_j$ along a fin curve.   
\item   To each $M_j$ is associated a graph $\gL_j$ with two types of 
vertices: $S$--vertices $\scriptstyle\blacksquare$ corresponding to smooth medial sheets $S_{j k}$ of 
$M_j$, and nodes $\bullet$ corresponding to the connected components $\cY_{j k^{\prime}}$ of 
the $Y\!$--network of $M_j$.  There is an edge from $S_{j k}$ to $\cY_{j k^{\prime}}$ for each 
boundary $S^1$ of $S_{j k}$ which is attached to $\cY_{j k^{\prime}}$.  
\item  Each medial sheet $S_{j k}$ is compact surface with boundary 
(possibly nonorientable).  Each $Y\!$--network component $\smash{\cY_{j k^{\prime}}}$ 
can be described as a $4$--valent extended graph $\smash{\gP_{j k^{\prime}}}$ (or by 
the corresponding reduced graph $\smash{\tilde \gP_{j k^{\prime}}}$).  
\item   At the third level, the extended graph $\gL_j$ has data assigned to 
the $S$--vertices, $Y\!$--nodes, and edges.  To an $S$--vertex for $S_{j k}$ is assigned the 
data $(g, o, e)$ indicating the genus $g$, $o = 1$ or $0$ denoting orientability or 
nonorientability and $e$ the number of Blum edge curves. To a $Y\!$--node for $\smash{\cY_{j 
k^{\prime}}}$ is assigned the $4$--valent extended graph $\smash{\gP_{j k^{\prime}}}$. To an edge 
from $S_{j k}$ to $\smash{\cY_{j k^{\prime}}}$, the topological attaching data of the boundary circle 
of $S_{j k}$ to $\smash{\cY_{j k^{\prime}}}$.  
\item   Furthermore, the graph theoretic data of each $\gL_j$, the number 
of edge curves, $6$--junction points and the weighted total genus satisfy the 
equation  \eqref{Eqn3.3b}.  
\item The fundamental groups of each irreducible medial component are 
given in terms of generators and relations by \eqref{Eqn7.5a}, and the 
homology groups of each irreducible medial component are torsion free and 
given as the kernel and cokernel of the algebraic attaching map 
\eqref{Eqn8.11}.  
\end{enumerate}
\end{Thm}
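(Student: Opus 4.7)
The plan is to assemble the theorem as a synthesis of the constructions and results developed in Sections~\ref{S:sec1}--\ref{S:sec3}, treating each numbered clause of the conclusion as the output of one of those constructions, and then verifying that the assignments of data are consistent.

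First I would construct the top level graph $\gG(M)$. Starting from the local generic structure recalled at the beginning of \fullref{S:sec1}, I apply the \fullref{DecmpAlg} to decompose $M$ into the irreducible medial components $\{M_i\}_{i=1}^r$. To obtain the simplified attaching, I invoke the Isotopy Lemma for Fin Curves (\fullref{Lem2.1}) together with \fullref{Lem2.2} to replace $M$ by its homotopy equivalent model $\hat M$ in which fin curves no longer cross $6$--junction points. The directed edges from $M_i$ to $M_j$ are then defined by the attaching data of edge segments of $M_i$ to smooth sheets of $M_j$, establishing clause~(1). The compatibility with the homotopy type of $M$ is already recorded in \fullref{Prop2.4}.

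Next I would build the second level graphs $\gL_j$. For each irreducible component $M_j$, the cutting procedure of \fullref{S:sec3} applied along the remaining $Y\!$--network $\cY_j$ produces the compact (possibly nonorientable) surfaces with boundary $S_{jk}$ and the connected components $\cY_{jk'}$. Defining an $S$--vertex for each $S_{jk}$, a $Y\!$--node for each $\cY_{jk'}$, and an edge for each boundary circle of $S_{jk}$ attached to $\cY_{jk'}$, gives the extended graph $\gL_j$ of clause~(2). For clause~(3), the surface structure of $S_{jk}$ is immediate from the cutting construction, while the $4$--valent property of the extended graph $\gP_{jk'}$ follows from the fact that each $6$--junction point of $\cY_j$ has exactly four $Y\!$--junction curves meeting it (so vertices in $\gP_{jk'}$ have valence~$4$). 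The reduced form $\tilde\gP_{jk'}$ is obtained by the weighted-graph procedure described just before \fullref{fig.11}. Clause~(4) is then the formal assignment of the data $(g,o,e)$ to $S$--vertices, the graph $\gP_{jk'}$ to $Y\!$--nodes, and the topological attaching circle data to the edges, exactly as set up in \fullref{S:sec3}.

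Finally, clauses~(5) and~(6) are direct citations of the results for irreducible medial components already proved. The numerical identity \eqref{Eqn3.3b} is part of \fullref{Thm3.3}(2), and the torsion-free homology with its description as kernel and cokernel of the algebraic attaching map, together with the presentation of $\pi_1(M_j)$, are the content of \fullref{Thm3.3}(3) and (5). The main content of the proof is therefore the verification that the two-level assignment of graphs and data is well defined: the graph $\gG(M)$ does not depend on the intermediate homotopies used in constructing $\hat M$ beyond the ambiguities already noted in \fullref{Rem2.1a} and \fullref{Ex2.1b}--\ref{Ex2.1a}, and for each $M_j$ the graph $\gL_j$ together with its decorations is intrinsic to $M_j$ because the $Y\!$--network $\cY_j$ and its stratification are intrinsic. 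The main obstacle is the bookkeeping that keeps the algorithm-dependent choices (cutting order, choice of fin sheet on which an isotoped fin curve lies) from affecting either the list of irreducible components or the homotopy-invariant data on $\gL_j$; this is controlled by combining \fullref{Lem2.1}, \fullref{Lem2.2}, and \fullref{Prop2.4}, which together show that all ambiguities are absorbed into choices that do not affect the homotopy type or the intrinsic invariants entering \eqref{Eqn3.3b}, \eqref{Eqn7.5a}, and \eqref{Eqn8.11}.
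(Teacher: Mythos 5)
Your proposal is correct and follows essentially the same route as the paper: the structural clauses (1)--(4) are exactly the constructions of Sections~\ref{S:sec1}--\ref{S:sec3} (decomposition into irreducible components via the algorithm and the fin-curve isotopy lemmas, then attaching medial sheets to the $Y\!$--network to form $\gL_j$ with its data), and clauses (5)--(6) are citations of \fullref{Thm3.3}, whose proofs the paper likewise defers to the CW--decomposition and exact-sequence arguments of Sections~\ref{S:sec6}--\ref{S:sec8}. Your added remarks on the algorithm-dependent choices are consistent with the paper's own caveats (\fullref{Rem2.1a}, \fullref{Ex2.1b}, \fullref{Ex2.1a}), which only claim invariance of the irreducible components and of the homotopy type, not uniqueness of the decomposition.
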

\begin{proof}
We have established the decomposition into irreducible medial components, 
which in turn can be represented by the attaching of medial sheets to the 
$Y\!$--network.  

The computation of the fundamental group in terms of generators and 
relations will be given in \fullref{S:sec7} based on the CW--decomposition 
given in \fullref{S:sec6}, and the computation of both the homology and the 
reduced Euler characteristic will be given in \fullref{S:sec8}.
\end{proof}
Several examples we have already seen illustrate the general structure 
theorem.
\begin{Example}
\label{Ex4.3}   In \fullref{fig.8c}, $M$ is formed from $4$ medial components, of 
which $M_2$, $M_3$, and $M_4$ are topologically $2$--disks ($\cY =
\emptyset$ so $c_i = v_i = 0$, and $s_i = e_i = 1$).  If we slide
$M_4$ onto $M_1$, then the graph $\gG (M)$ is a tree.  $M_1$ is formed
from $5$ medial sheets, all of genus $0$: two are $2$--disks, $2$ are
annuli, and one is a $2$--disk with two holes.  Except for the annuli,
each of the other $3$ medial sheets of $M_1$ have single medial edge
curves.  The $Y\!$--network is empty except for $M_1$ where it
consists of $2$ disjoint $\circ$.  $\gL_1$ is a tree, and the
attaching of the $2$--disks for $M_1$ kill the two free generators for
$\pi_1(\cY_1) \cup \gL_1$.  Hence, $M_1$ is simply- connected.  Also,
we see $\nu(M_1) = 5 -2 -0 = 3$ and the other $\nu(M_i) = 1$.  Hence,
$\tilde \chi(M) = (3 + 3\cdot 1 + 0) -(0 + 3 + 3\cdot 1 + 0) = 0$.
 
Thus, $H_2(M_1) = 0$, and $M_1$ and hence all of the $M_i$ and $M$ are 
contractible.
\end{Example}
\begin{Example}
\label{Ex4.4}  
The medial axis shown in \fullref{fig.13} is irreducible so $M = M_1$.  There 
are five medial sheets ($s = 5$) all of which have genus $0$: $3$ annuli (two 
of which have a medial edge curve) and $2$ disks (so $G = 0$, $e = 2$, $q = -
1$ and the $Y\!$--network consists of $2$ disjoint $\circ$ ($c = 2$, $v = 0$).  
Also, $\gL_j$ is a tree so $\gl = 0$.  As above, the attaching of the $2$-
disks for $M_1$ kill the two free generators for $\pi_1(\cY_1) \cup \gL_1$ 
so $M = M_1$ is simply connected.  We see $\nu(M) = 5 - 2 + 0 = 3$.  Thus, 
$\tilde \chi(M) = 3 - (0 + 2 + 0) = 1$.  Hence, $H_2(M_1) = \Z$, and $M = 
M_1$ is homotopy equivalent to $S^2$.
\end{Example}

\section{Structure theorem for contractible regions}  
\label{S:sec5} For contractible regions $\Omega$, the structure theorem for the Blum medial axis 
takes a considerably simplified form.  We saw in \fullref{Cor3.4} that the medial 
sheets and extended graphs had to have a special form.  As well, we define in \fullref{S:sec7} 
a set of elements $r_{i j}$ in $\pi_1(\mathcal{S}\cY_i^{\prime})$ corresponding to medial sheets 
$S_{i j}$ without edge curves; see \eqref{Eqn7.5b}.  The computation of the fundamental 
group in \fullref{Thm7.5} leads to the following condition.

\begin{TitleThm}\setobjecttype{TitThm}
\label{Thm5.0}
For an irreducible medial component $M_i$, the set of elements 
$$   \{ r_{i j} :  \mbox{ all $j$ for which $S_{i j}$ is without edge curves of 
$M_i$} \} $$
form a set of generators for $\pi_1(\mathcal{S}\cY_i^{\prime})$.  
\end{TitleThm}
Taken together the conditions on the extended graphs being trees, the 
numerical \lq\lq Euler Relation\rq\rq\ and the preceding \fullref{Thm5.0} completely characterize contractible regions.  

\begin{Thm}[Structure theorem for contractible regions]
\label{Thm5.1} Suppose $\Omega \subset \R^3$ is a contractible bounded region with generic 
smooth boundary $\cB$ and Blum medial axis $M$.  Then there is associated to $\Omega$ a 
\textit{multilevel directed tree structure} which determines the simplified structure $\hat 
M$ associated to $M$.  
\begin{enumerate}
\item  At the top level, $\Gamma(M)$ is a directed tree consisting of 
vertices corresponding to the irreducible medial components $M_i$ of $M$, and there are 
directed edges from $M_i$ to $M_j$ corresponding to the attaching of an edge of $M_i$ to 
$M_j$ along a fin curve in $\hat M$.   
\item  At the second level, to each $M_j$ is associated a directed tree 
$\gL_j$ with two types of vertices: $S$--vertices $\scriptstyle{\blacksquare}$ corresponding to smooth 
medial sheets $S_{j k}$ of $M_j$, and $Y\!$--nodes $\bullet$ corresponding to the connected 
components $\cY_{j k^{\prime}}$ of the $Y\!$--network of $M_i$.  There is an edge from $S_{j 
k}$ to $\cY_{j k^{\prime}}$ if a boundary \lq\lq circle\rq\rq\ of $S_{j k}$ is attached to 
$\cY_{j k^{\prime}}$.  Given $S_{j k}$ and $\cY_{j k^{\prime}}$, there is at most one such 
boundary \lq\lq circle\rq\rq.  
\item  Each medial sheet $S_{j k}$ is topologically a $2$--disk with a finite 
number of holes.  At most one of the boundary circles of a medial sheet is an 
edge curve of the $M_j$.  Each $Y\!$--network component $\cY_{j 
k^{\prime}}$ can be described as a $4$--valent extended graph $\gP_{j 
k^{\prime}}$ (or by the corresponding reduced graph $\tilde \gP_{j 
k^{\prime}}$.  
\item  At the third level, the tree $\gL_j$ has data assigned to the 
$S$--vertices, $Y\!$--nodes, and edges.  To an $S$--vertex for $S_{j k}$ is assigned $(h, e)$ 
indicating the number of holes and Blum edge curves. To a $Y\!$--node for $\cY_{j k^{\prime}}$ 
is assigned the $4$--valent extended graph $\gP_{j k^{\prime}}$. To an edge from $S_{j 
k}$ to $\cY_{j k^{\prime}}$, the topological attaching data of the single boundary circle of 
$S_{j k}$ to $\cY_{j k^{\prime}}$.  
\item  Furthermore, for an irreducible medial component $M_j$, the number 
of medial sheets, edge curves, $6$--junction points and the total number of 
connected components of the $Y\!$--network $\cY_{i}$ satisfy the Euler 
relation \eqref{Eqn3.4}, and each $M_j$ satisfies the fundamental group 
relation \fullref{Thm5.0}.
\end{enumerate} 
Conversely, suppose we are given a bounded region $\Omega$ in $\R^3$ with 
smooth generic boundary and Blum medial axis $M$ so that: the top level 
graph $\Gamma(M)$ is a tree; for each irreducible medial component $M_j$, 
the graph $\gL_j$ is a tree; the medial sheets are topologically $2$--disks 
with a finite number of holes, having at most one boundary circle an edge 
curve of $M_j$; the numerical invariants of $M_j$ satisfy \eqref{Eqn3.4}; and 
each $M_j$ satisfies \fullref{Thm5.0}.  Then 
$M$ and (hence) $\Omega$ are contractible.  
\end{Thm}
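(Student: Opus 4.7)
The plan is to reduce to the component level via \fullref{Cor2.6} and then invoke \fullref{Cor3.4}. Since $M$ is a strong deformation retract of $\Omega$, the contractibility of $\Omega$ forces $M$, and hence each $M_i$ together with $\gG(M)$, to be contractible by \fullref{Cor2.6}. Applying \fullref{Cor3.4} to each contractible $M_i$ immediately yields conditions (2), (3), and the Euler relation of (5). The fundamental group condition (\fullref{Thm5.0}) then follows from part (5) of \fullref{Thm3.3}: the presentation of $\pi_1(M_i)$ as the quotient of the free group on generators of $\pi_1(\mathcal{S}\cY_i^{\prime})$ by the relators $\{r_{i j}\}$ coming from sheets without edge curves must be trivial, which in a free group means precisely that the $r_{i j}$ generate $\pi_1(\mathcal{S}\cY_i^{\prime})$.

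\textbf{Converse direction.} The plan is to show each $M_j$ is contractible and then lift to $M$ via \fullref{Prop2.4}. First I would establish $\pi_1(M_j) = 0$: under the hypotheses the fundamental group condition asserts that the relators $\{r_{j k}\}$ in the presentation \eqref{Eqn7.5b} already generate the free group $\pi_1(\mathcal{S}\cY_j^{\prime})$, so the quotient of \fullref{Thm3.3}(5) is trivial. Next I would compute $\tilde\chi(M_j)$: since $\gL_j$ is a tree $\gl_j = 0$, and since each medial sheet is a disk with holes $G_j = 0$, so formula \eqref{Eqn3.3b} together with the Euler relation \eqref{Eqn3.4} gives $\tilde\chi(M_j) = s_j - (e_j + v_j + c_j) = 0$. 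Since $M_j$ is $2$-dimensional with torsion-free homology by \fullref{Thm3.3}(1), and $H_1(M_j) = 0$ follows from $\pi_1(M_j) = 0$, the vanishing of $\tilde\chi(M_j)$ forces $H_2(M_j) = 0$. Thus $M_j$ is a simply connected CW complex (using the CW decomposition from \fullref{S:sec6}) with trivial reduced homology, so Hurewicz and Whitehead make it contractible.

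Once each $M_j$ is contractible and $\gG(M)$ is a tree, \fullref{Prop2.4} gives $\pi_1(M) = 0$ and $\tilde H_*(M) = 0$; another application of Hurewicz and Whitehead to the CW complex $M$ yields $M$ contractible, and since $M$ is a strong deformation retract of $\Omega$, so is $\Omega$.

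\textbf{Main obstacle.} The routine pieces (Euler characteristic arithmetic, Hurewicz/Whitehead, Mayer--Vietoris/Seifert--Van Kampen via \fullref{Prop2.4}) are available essentially for free. The real work lies in setting up the algebraic and combinatorial machinery that makes \fullref{Thm5.0} equivalent to triviality of $\pi_1(M_j)$: one must identify the relators $r_{j k}$ coming from medial sheets without edge curves with specific loops in $\mathcal{S}\cY_j^{\prime}$ so that the presentation of \fullref{Thm3.3}(5) collapses exactly under the stated generation hypothesis. This identification, together with the explicit computation of $\pi_1(\mathcal{S}\cY_j^{\prime})$ and $\tilde\chi(M_j)$ in terms of the numerical invariants of $\gL_j$, is deferred to the proofs of \fullref{Thm3.3} and \fullref{Cor3.4}, at which point the equivalence asserted here becomes a direct consequence.
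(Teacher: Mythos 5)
Your proposal is correct and follows essentially the same route as the paper: the forward direction via \fullref{Cor2.6}, \fullref{Cor3.4}, the vanishing of $\tilde\chi(M_j)$, and \fullref{Thm7.5}; the converse by using \fullref{Thm5.0} with \fullref{Thm7.5} to kill $\pi_1(M_j)$, the tree and genus-zero hypotheses with the Euler relation to get $\tilde\chi(M_j)=0$ and hence $H_2(M_j)=0$ by torsion-freeness, and then \fullref{Prop2.4} plus Hurewicz to conclude contractibility of $M$ and $\Omega$. The only differences (applying Hurewicz at the component level before assembling via \fullref{Prop2.4}, and using $G_j=\gl_j=0$ rather than $q_j=0$ in the Euler computation) are cosmetic.
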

\begin{proof}
If $\Omega$ is contractible, then so is $M$ which is a strong deformation retract of $\Omega$.  
Then by \fullref{Cor2.6}, $\Gamma(M)$ is a tree and each $M_j$ is contractible.  Then 
by \fullref{Cor3.4}, each $\gL_j$ is a tree, and each medial sheet $S_{j k}$ is 
topologically a $2$--disk (ie has genus $0$), with a finite number of holes.  At most one of 
the boundary circles of a medial sheet is an edge curve of the $M$.  

 Furthermore, 
$\tilde \chi(M_j) = 0$, yielding the Euler relation \eqref{Eqn3.4}.  As $\pi_1(M_j) = 0$, by 
\fullref{Thm7.5}, \fullref{Thm5.0} must hold.  

 Conversely, by \fullref{Thm7.5}, \fullref{Thm5.0} implies that $\pi_1(M_j) = 0$ for each $j$.  
Hence, $H_1(M_j; \Z) = 0$.  As the graphs $\gL_j$ are trees, $\gl_j = 0$, and all sheets have 
$q_j = 0$.  Thus, the Euler condition implies by \eqref{Eqn3.3b} that $\tilde \chi 
(M_j) = 0$.  Hence, as $H_2(M_j; \Z)$ is torsion free, $H_2(M_j; \Z) = 0$.  Thus, as 
$\Gamma(M)$ is a tree, by \fullref{Prop2.4}, $M$ is simply connected and $\tilde 
H_*(M; \Z) = 0$.  As $M$ is a CW--complex, a theorem of Hurewicz implies $M$ is contractible.
\end{proof}

\begin{Example}[Simplest examples]
\label{Ex5.3}
 As the simplest examples illustrating the structure theorem, we consider 
those for which the second level trees are $\Pi_j = \emptyset$ or $\circ$. 

(1)\qua \lq\lq Simple contractible examples\rq\rq\qua   Each $\Pi_j = \emptyset$ so each 
$M_i$ is topologically a $2$--disk.  This is the simplest type of region.  For example, in 
\fullref{fig.7b} and \fullref{fig.7a}, the medial axis is decomposed into irreducible medial 
components of this type.  In computer imaging, the $M$--rep structure of Pizer and coauthors 
\cite{P} is based on the region having such a simple medial structure.  

(2)\qua  The second 
type of regions would allow both $\Pi_j = \emptyset$ or $\circ$ and the only type of medial 
sheets are $2$--disks and annuli.  Besides the example in \fullref{fig.1}, we also see the 
example in \fullref{fig.5} and the first three examples of \fullref{fig.9} are of this type  

(3)\qua  \fullref{fig.13a} is a collapsed version of the \lq\lq umbilic torus\rq\rq\ 
investigated independently by Helaman Ferguson (who has produced sculptures of it) and 
Christopher Zeeman who called it the \lq\lq umbilic bracelet\rq\rq.  The discriminant for 
real cubic binary forms is a cone on this space.  The \lq\lq umbilic torus/bracelet\rq\rq\ is 
obtained by rotating the hypocycloid of three cusps about an external axis in its plane, 
while rotating by $\frac{2\pi}{3}$.  If we collapse the hypocycloid onto the three line 
skeleton as in \fullref{fig.13a}\,(a), and rotate then we obtain \fullref{fig.13a}\,(b).  It 
consists of an annulus attached to an inner $Y\!$--junction curve and an edge curve.  It has 
$\Pi_j = \circ$ and a single edge curve, and illustrates how only a single medial sheet may 
meet the $Y\!$--junction curve.   

(4)\qua  The next case would have a reduced network graph of 
the form \lq\lq $\bullet\ 4$\rq\rq\ as for the fourth example of \fullref{fig.9}, which is 
contractible.  

It appears to be a rather difficult question to determine exactly when 
examples such as (3) or (4) can be included as part of larger medial structures with special 
properties such as being contractible.  
\end{Example}

\begin{figure}[ht!]
\labellist
\small\hair 2pt
\pinlabel {(a)} [tr] at 96 541 
\pinlabel {(b)} [tr] at 284 541
\endlabellist
\centerline{\includegraphics[width=9cm]{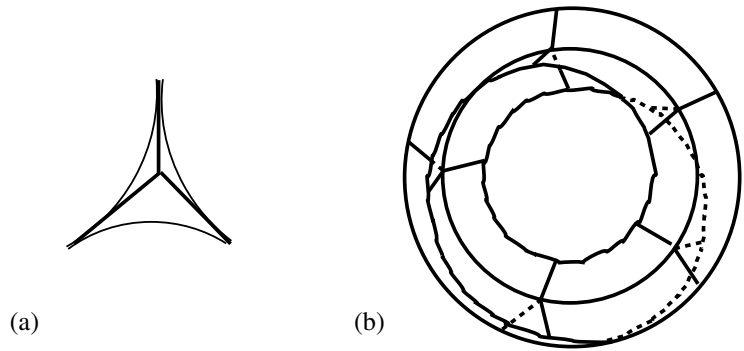}}
\caption{\label{fig.13a} (b) Skeleton of an \lq\lq umbilic 
torus/bracelet\rq\rq, obtained by rotating the three intersecting lines in (a) 
around an axis while rotating by $\frac{2\pi}{3}$}
\end{figure} 

\begin{Example}[Tree structure for a noncontractible region]
\label{Ex5.4}
A region can have tree structures at each of the two levels and consist of 
medial sheets of genus $0$ attached as prescribed by \fullref{Thm5.1} yet not be contractible.  An example is given by \fullref{fig.13}.  $M$ is not contractible, and it is only the Euler relation that 
fails: $s = 5$, $e= 2$, while $m = 2$ and $v = 0$, so $s - e = 3 \neq 2 = v + 
m$.  
\end{Example}

\begin{figure}[ht!]
\labellist
\small\hair 2pt
\pinlabel $M$ [t] at 95 609
\pinlabel $\cY_1$ [r] at 218 668
\pinlabel $\cY_2$ [r] at 218 598
\pinlabel $S_1$ [l] at 300 724
\pinlabel $S_2$ [l] at 300 686
\pinlabel $S_3$ [l] at 300 626
\pinlabel $S_4$ [l] at 300 585
\pinlabel $S_5$ [l] at 300 539
\pinlabel $\Lambda$ at 363 609
\pinlabel $\cY_1$ [tr] at 433 618
\pinlabel $\cY_2$ [tl] at 490 618
\pinlabel $S_1$ [b] at 368 673
\pinlabel $S_2$ [b] at 401 673
\pinlabel $S_3$ [b] at 455 673
\pinlabel $S_4$ [b] at 505 673
\pinlabel $S_5$ [b] at 545 673
\endlabellist
\centerline{\includegraphics[width=12cm]{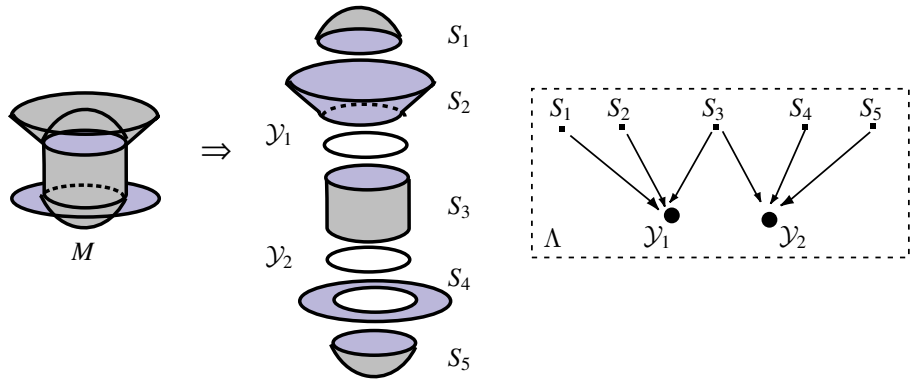}}
\caption{\label{fig.13} Medial axis $M$ of a noncontractible region with tree 
structures for $\Gamma(M)$ and $\gL$}
\end{figure} 

\section{Representations of the topological structure of the medial sheets}  
\label{S:sec6} To determine the topology of an irreducible medial component $M_j$, we give 
two alternate representations of it as either built up by attaching surfaces with boundaries 
or by attaching cells.  These give two alternate useful ways to decompose $M_i$ to compute 
homology and the fundamental group.  We introduce two  subspaces $\mathcal{S}\cY_j \subset 
\mathcal{S}\cY_j^{\prime} \subset M_j$, such that $\mathcal{S}\cY_j$ will contain both $\cY_j$ and a subspace 
homotopy equivalent to the extended graph $\gL_j$.  

 To do this we consider a medial 
sheet $S_{j k}$ of $M_j$.  We may use the standard representation for compact surfaces as 
quotients of the $2$--disk $D^2$ after appropriately identifying edges of the boundary as in 
Massey \cite{Mas}.  In our case, $S_{j k}$ is a quotient of a $2$--disk $D^2$ with a finite 
number of holes obtained by removing the interiors $B_i$ of embedded $2$--disks $\bar B_i, i 
= 1, \dots \ell_{j k} + e_{j k}$.  Here $\ell_{j k}$ is the number of the disk boundaries 
which will be attached to $\cY_j$, and the other $e_{j k}$ will remain edge curves in $M_j$.  
We may assume that for example, the edges of the boundary of $D^2$ are identified as given by 
the classification of surfaces; see eg \fullref{fig.9a}\,(a).  

\begin{figure}[ht!]
\labellist
\small\hair 2pt
\pinlabel {(a)} [tr] at 70 574
\pinlabel {$a_{i_3}$} [r] at 72 651
\pinlabel {$a_{i_2}$} [br] at 79 685
\pinlabel {$a_{i_1}$} [b] at 101 712
\pinlabel {$a_{i_m}$} [bl] at 188 717
\pinlabel $B_1$ [b] at 109 683
\pinlabel {$B_{e_{jk}}$} [l] at 133 588
\pinlabel {$S_{jk}$}  at 185 593
\pinlabel $D'$ at 179 697
\pinlabel {(b)} [tr] at 290 574
\pinlabel {$x_{k0}$} [r] at 359 642
\pinlabel {$S_{jk}$} at 389 589
\pinlabel {$\Delta_{jk}$} at 401 695
\pinlabel {$x_{k0}$} [r] at 499 644
\pinlabel {$\Gamma_{jk}$} [b] at 520 675
\endlabellist
\centerline{\quad\includegraphics[width=12cm]{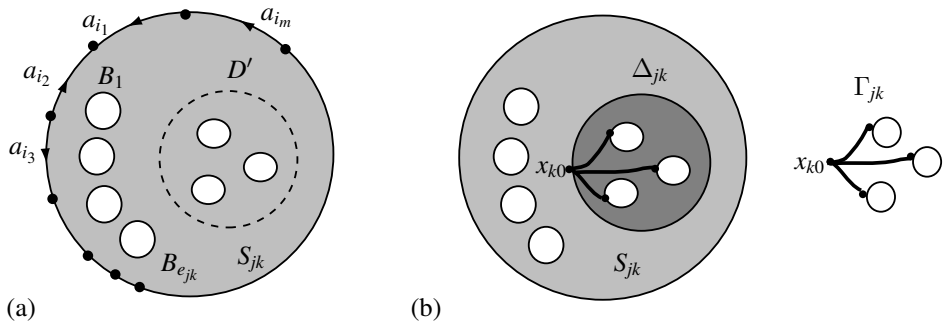}}
\caption{\label{fig.9a}\,(a) $S_{j k}$ identified as the quotient of a 
$2$--disk with a finite number of holes by identifying the edges of the 
boundary;  (b)  Edges of holes to be attached to $\cY$ contained in $\gD$, and 
deformation retract $\Gamma_{j k}$.}
\end{figure} 

Thus, $S_{j k} = (D^2 \backslash (\tsty\bigcup_i B_i))/ \sim$.  To be specific, we 
suppose that the boundaries of $\bar B_i, i = 1, \dots \ell_{j k}$ are the 
ones which will be attached to $\cY_j$.  Up to isotopy, we may assume that 
$\bar B_i, i = 1, \dots \ell_{j k}$ are contained in a $2$--disk $D^{\prime}$  
as shown in \fullref{fig.9a}\,(b).  We let $\gD_{j k} = D^{\prime} \backslash 
(\tsty\bigcup_{i = 1}^{\ell_{j k}} B_i)$.  We may construct curves $\gb_i$ in $\gD_{j 
k}$ from a point $x_{j 0}$ on the boundary of $\gD_{j k}$ to boundary points 
$x_{j i}$ of each $B_i$, so the curves are disjoint from each other except at 
$x_{j 0}$.  

We let $\Gamma_{j k}$ denote the union of the curves $\gb_i$ and the union 
of the boundary edges $\partial \bar B_i, i = 1, \dots \ell_{j k}$.  Then 
$\Gamma_{j k}$ is a strong deformation retract of $\gD_{j k}$.  
We let 
$$\mathcal{S}\cY_j \,\, = \,\, (\cY_j \cup (\tsty\bigcup_k \gD_{j k}))/ \sim \quad \mbox{ 
and  } \Gamma_j \,\, = \,\, (\tsty\bigcup_k \Gamma_{j k})/ \sim$$
where $\sim$ denotes the equivalence for attaching the interior boundary 
edges in the $\Gamma_{j k}$ to $\cY_j$.  Since every edge of $\cY_j$ is the 
image of one of the attached edges of some $S_{j k}$, we see that $\cY_j 
\subseteq \Gamma_j \subset \mathcal{S}\cY_j$. Also, $\Gamma_j$ is a strong 
deformation retract of $\mathcal{S}\cY_j$ by retracting each $\gD_{j k}$ to 
$\Gamma_{j k}$ leaving $\cY_j$ fixed.  
$$    \tilde S_{j k} \,\, = \,\, S_{j k} \backslash \mbox{int}(\gD_{j k}).  
\leqno{\hbox{We next let}}
$$  
Then we can attach each $\tilde S_{j k}$ to $\mathcal{S}\cY_j$ along $C_{j k}$  the 
outer boundary edge of $\gD_{j k}$.  Attaching all such $\smash{\tilde S_{j k}}$ 
recovers $M_j$.  We summarize these statements with the following lemma.

\begin{Proposition}
\label{Prop6.1}
Let $M_j$ be an irreducible medial component.  Then the following hold:
\begin{enumerate}
\item  There is a subspace $\Gamma_j \subset \mathcal{S}\cY_j$ of $M_j$ which 
contains the $Y\!$--network $\cY_j$. 
\item  $\Gamma_j$ is a $1$--complex which is a strong deformation retract 
of $\mathcal{S}\cY_j$.  
\item  If we collapse the connected components of $\cY_j$ in $\Gamma_j$ to 
points, then the resulting quotient space is homeomorphic to the second 
level extended graph $\gL_j$. 
\item  There are compact surfaces with boundaries $\tilde S_{j k}$  obtained 
from the medial sheets $S_{j k}$ by removing $\gD_{j k}$, a $2$--disk with 
holes whose edges are identified to $\cY_j$.  Attaching the $\smash{\tilde S_{j k}}$ 
to $\smash{\mathcal{S}\cY_j}$ recovers $M_j$, up to homeomorphism.
\end{enumerate}
\end{Proposition}
We shall refer to this as the \textit{modified medial sheet representation}.
\subsection*{CW--decomposition of $M_j$}
We further simplify the preceding, by introducing an intermediate space $\mathcal{S}\cY_j^{\prime}$. 

 We divide the medial sheets into two types: those with edge curves and those without.  
If $S_{j k}$ has an edge curve, then in the earlier notation it has $e_{j k} > 0$ edge curves 
(each surrounding a hole) in the $2$--disk.  We denote these by $E_{j k}$.  We can choose 
$e_{j k} - 1$ disjoint paths $\tau_{k i}$ from $x_{k 0}$ to points on the distinct edge curves 
$E_{k i}$ for $i < e_{j k}$.  We can then form $\tilde E_{k i} = E_{k i} \cup \tau_{k i}$.   
Likewise, we choose a curve $\tau_{k 0}$ from $x_{k 0}$ to the boundary of $D^2$, not 
intersecting the paths $\tau_{k i}$.  We can form loops $\sigma_{j i}$ centered at $x_{k 0}$ by 
following $\tau_{k i}$, then going around the $i$--th edge curve counterclockwise, and then 
following $\tau_{k i}$ backwards back to $x_{k 0}$ (see \fullref{fig.10a}).  
\begin{figure}[ht!]
\labellist
\small\hair 2pt
\pinlabel {$\sigma_{j1}$} [r] at 74 662
\pinlabel {$\tau_{k0}$} at 110 686
\pinlabel {$\sigma_{je_{jk^{-1}}}$} at 118 600 
\pinlabel {$\tilde{S}_{jk}$} at 149 585
\pinlabel {$\delta_{jk}$} [b] at 156 679
\pinlabel {$\zeta_{jk}$} [bl] at 180 696
\pinlabel {$\sigma_{j1}$} [r] at 271 644
\pinlabel {$\sigma_{je_{jk^{-1}}}$} at 318 578 
\pinlabel {$\tau_{k0}$} at 311 665
\pinlabel {$\delta_{jk}$} [bl] at 363 654
\pinlabel {$L_{jk}$} at 341 697
\pinlabel {$\sigma_{j1}$} [r] at 461 641
\pinlabel {$\tau_{k0}$} at 500 658
\pinlabel {$\sigma_{je_{jk^{-1}}}$} at 504 576 
\pinlabel {$\tilde{L}_{jk}$} at 538 691  
\endlabellist
\centerline{\includegraphics[width=12cm]{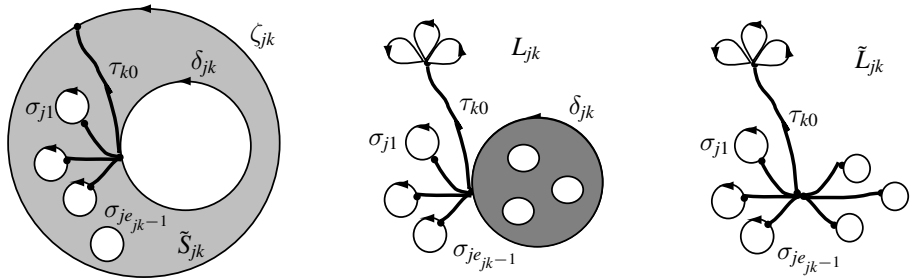}}
\caption{\label{fig.10a} The spaces (a) $\tilde S_{j k}$, (b) $L_{j k}$ and (c) 
$\tilde L_{j k}$}
\end{figure} 
$$ L_{j k}\,\,  = \,\,  \gD_{j k} \underset{k < e_{j k}}{\cup} \tilde E_{j k} 
\cup \tau_{k 0} \cup (\partial D^2/\sim )    \leqno{\hbox{We let}}$$
and $\tilde L_{j k}$ is defined as $ L_{j k}$ except we replace $\gD_{j k}$ by 
$\Gamma_{j k}$.
Here \lq\lq $\sim$\rq\rq\ denotes the identification on the outer boundary of 
the $2$--disk.  In the case that $S_{j k}$ has no edge curves of $M_j$, then 
$$ L_{j k}\,\,  = \,\,  \gD_{j k} \cup \tau_{k 0} \cup (\partial D^2/\sim ). $$
We define
\begin{equation}
\label{Eqn6.3}
 \mathcal{S}\cY_j^{\prime} \,\, = \,\, \mathcal{S}\cY_j \cup_k L_{j k} \mbox{where the 
union is over all medial sheets in $M_j$.} 
\end{equation}
We describe the relation between $M_j$ and $\smash{\mathcal{S}\cY_j^{\prime}}$ in two steps.  First, we 
relate $\smash{S_{j k}}$ and $\smash{L_{j k}}$ (and $\smash{\tilde L_{j k}}$) and then deduce the consequences for 
$M_j$ and $\smash{\mathcal{S}\cY_j^{\prime}}$.  

 In the case $S_{j k}$ has no edge curves,  $S_{j k}$ is 
homeomorphic to the space obtained by attaching a $2$--disk to $L_{j k}$ as follows.  
\begin{Lemma}
\label{Lem6.4}
With the preceding notation:
\begin{enumerate}
\item  If $S_{j k}$ has no edge curves, then $S_{j k}$ is homeomorphic to the 
space obtained by attaching a $2$--disk to $L_{j k}$ along the path given in 
\eqref{Eqn6.2}.
\item  If $S_{j k}$ has edge curves, then $L_{j k}$ is a strong deformation 
retract of $S_{j k}$.
\item  $\tilde L_{j k}$ is a strong deformation retract of $L_{j k}$.  
\item  Furthermore, $\smash{L_{j k}}$ is homotopy equivalent (rel $\smash{\gD_{j k}}$) to 
$\smash{\gD_{j k} \vee \bigvee_i S^1}$ (or equivalently $\smash{\tilde L_{j k}}$ is homotopy 
equivalent (rel $\smash{\Gamma_{j k}}$) to $\smash{\Gamma_{j k}  \vee \bigvee_i S^1}$), where 
$\smash{\bigvee_i S^1}$ is a bouquet of $q_{j k}$ $S^1$'s (with $q_{j k}$ defined by equation 
\eqref{Eqn3.2a}).  
\end{enumerate}
\end{Lemma}
\begin{proof}
If $S_{j k}$ has no edge curves then we proceed as follows to recover $S_{j 
k}$ from $L_{j k}$ by attaching a $2$--disk.  We cut the annulus $\tilde S_{j 
k} = S_{j k}\backslash \mbox{int}(\gD_{j k})$ along $\tau_{k 0}$.  We obtain a 
topological $2$--disk.  Its boundary is mapped to $L_{j k}$ by first following 
$\tau_{k 0}$, then the boundary of the $2$--disk $D^2$ in a counter 
clockwise direction, then backwards along $\tau_{k 0}$ and finally around the 
boundary $C_{j k}$ of $\gD_{j k}$ but in a clockwise direction.  We denote the 
counterclockwise path around $\partial D^2$ by $\zeta_{j k}^{\prime}$, and 
the counterclockwise path around the boundary $\gD_{j k}$ by $\gd_{j k}$.  
Then a $2$--disk is attached along its boundary to $L_{j k}$ by the path 
\begin{equation}
\label{Eqn6.2}
 \xi_{j k} \, = \, \zeta_{j k} * \bar \gd_{j k} \quad \mbox{where } \quad 
\zeta_{j k} = \tau_{k 0}* \zeta_{j k}^{\prime} *\bar \tau_{k 0}
\end{equation} followed by the identification (as is common, $\bar \ga$ 
denotes the inverse of a path $\ga$).  

Next, suppose instead $S_{j i}$ has $e_{j k} > 0$ edge curves.  Then we cut 
$\tilde S_{j k} = S_{j k}\backslash \mbox{int}(\gD_{j k})$ along the paths 
$\tau_{k 0}$ and $\tau_{k i}$ for $i < e_{j k}$.  The resulting space is 
topologically an annulus with one boundary the edge curve $\smash{E^{\prime} 
\overset{\text{def}}{=} E_{j e_{j k}}}$ of the $e_{j k}$--th hole.  Then we may 
retract the annulus onto $L_{j k}$ by retracting along curves from the 
boundary edge of $E^{\prime}$ as shown, followed by the identification.  This 
gives $L_{j k}$ as a strong deformation retract of $S_{j k}$.  

Since this fixes $\gD_{j k}$, we may then further retract $\gD_{j k}$ onto 
$\Gamma_{j k}$, while fixing the rest of $L_{j k}$.  This gives $\smash{\tilde L_{j k}}$ 
as a strong deformation retract of $L_{j k}$.  

Lastly, the boundary of $D^2$ after identification is a bouquet of $S^1$\rq 
s, where the number is $2g_{k}$ if $S_{j k}$ is orientable, and $g_{k}$ if 
$S_{j k}$ is nonorientable.  

Finally, returning to $L_{j k}$, if we collapse all of the paths  $\tau_{k 0}$ and 
$\tau_{k i}$, $i < e_{j k}$, to the point $x_{j 0}$, then $\smash{\tsty\bigcup_{i < e_{j k}} E_{j 
i} \cup \tau_j \cup (\partial D^2/\sim )}$ becomes a bouquet of $q_{k j}$ 
$S^1$\rq s.  This collapsing induces a projection map from $\smash{L_{j k}}$ to 
$\smash{\gD_{j k} \vee \bigvee_{i = 1}^{q_{k j}} S^1}$ which is the desired homotopy 
equivalence.  An analogous argument works for the space $\smash{\tilde L_{j k}}$.
\end{proof}
Since the deformation retraction for $L_{j k}$ (resp.\ $\tilde L_{j k}$)  in (3) of \fullref{Lem6.4} fixes $\gD_{j k}$ (resp.\ $\Gamma_{j k}$), we may apply the Lemma to each medial 
sheet with an edge curve after it is attached to $\cY_j$.  If we let $\smash{\mathcal{S}\cY_j^{\prime}}$ 
denote $\mathcal{S}\cY_j$ with all of these medial sheets attached, then even though each medial 
sheet $S_{j k}$ contracts to a bouquet of $S^1$\rq s attached to $\gD_{j k}$, as $\mathcal{S}\cY_j$ 
is path connected, up to homotopy we may bring these bouquets together and conclude the 
following \lq\lq CW--decomposition of $M_j$\rq\rq.  
\begin{Proposition}
\label{Prop6.4}
For an irreducible medial sheet $M_j$, we have the following:
\begin{enumerate}
\item  $M_j$ is homotopy equivalent to a subspace $M_j^{\prime}$ obtained 
from $\mathcal{S}\cY_j^{\prime}$ by attaching for each sheet $S_{j k}$ without 
edge curve a $2$--disk along the closed path given by \eqref{Eqn6.2}.
\item $\mathcal{S}\cY_j^{\prime}$ is homotopy equivalent (rel $\mathcal{S}\cY_j$) to 
$\big(\bigvee_{i = 1}^{q_j} S^1\big) \vee \mathcal{S}\cY_j$, and hence to the $1$--complex 
$\big(\bigvee_{i = 1}^{q_j} S^1\big) \vee \Gamma_j$ (here $q_j$ is defined in \fullref{S:sec3}).  
\end{enumerate}
\end{Proposition}
\begin{proof}
By \fullref{Lem6.4}, we can first strong deformation retract each 
sheet $S_{j k}$ with an edge curve onto $L_{j k}$.  Then we can obtain each 
sheet $S_{j k}$ without an edge curve by attaching a $2$--disk to $L_{j k}$.  
Because this is done while fixing $\mathcal{S}\cY_j^{\prime}$, we may do this all at 
once obtaining the strong deformation retract $M_j^{\prime}$.  

Also, by \fullref{Lem6.4}, each $L_{j k}$ has as a strong deformation 
retract $\tilde L_{j k}$ (rel $\gG_{j k}$), which is homotopy equivalent (rel 
$\Gamma_{j k}$) to $\smash{\big(\bigvee_{i = 1}^{q_{j k}} S^1\big) \vee \Gamma_{j k}}$.  Again 
doing this for each $L_{j k}$ gives $\smash{\big(\bigvee_{i = 1}^{q_j} S^1\big) \vee \Gamma_j}$ 
as a strong deformation retract of $\mathcal{S}\cY_j^{\prime}$.  Thus, $M_j$ is 
homotopy equivalent to the space obtained from $\smash{\big(\bigvee_{i = 1}^{q_j} S^1\big) 
\vee \Gamma_j}$ by attaching one $2$--disk for each medial sheet $S_{j k}$ 
without edge curves.  
\end{proof}

\section{Fundamental group of irreducible medial components}  
\label{S:sec7} To compute the fundamental group of an irreducible medial component $M_j$, we 
use the CW--complex decomposition we have just given in \fullref{S:sec6}.  We successively 
use the form of the fundamental group for $Y\!$--network components given in \fullref{Prop3.1} and that of the graph $\gL_j$ to compute the fundamental group of $\mathcal{S}\cY_j$.  
We then compute the changes obtained by attaching the $L_{j k}$ to give that for 
$\mathcal{S}\cY_j^{\prime}$, and lastly determine the effect of attaching the $2$--disks 
corresponding to $S_{j k}$ without edge curves. 

 We first compute the fundamental group 
of $\mathcal{S}\cY_j$ as follows.
\begin{Proposition}
\label{Prop7.1} 
With the preceding notation,
\begin{equation}
\label{eqn7.1}
 \pi_1(\mathcal{S}\cY_j) \,\, \simeq \,\, \pi_1(\Gamma_j) \,\, \simeq \,\, 
\pi_1(\gL_j) * ( *_{i = 1}^r \pi_1(\cY_{j i}))  
\end{equation}
where there are $r$ components for $\cY_j$. 
\end{Proposition}
\begin{proof}
By the preceding remarks, it is sufficient to compute $\pi_1(\Gamma_j)$, viewing $\Gamma_j$ 
as an extended graph.  We choose a maximal tree $T_{j k}$ for each component $\cY_{j k}$ and 
a maximal tree $R_j$ for $\gL_j$.  Furthermore, as the image of an edge $\partial \bar B_{j 
i}$ will cover completely each edge of $\cY_{j k}$ which it meets, we may choose the points 
$x_{j k}$ so their images lie in $T_{j k}$, eg they are vertices $y_m$.  

We also have to treat the special case where one of the $\cY_{j k}$ is an 
$\circ$ without any vertices.  As in the proof of \fullref{Prop2.5}, 
we choose a point in $\circ$ as an artificial vertex, so the $\circ$ can be 
viewed as a loop on the vertex and then the vertex itself becomes the 
maximal tree.  As earlier, we do not count such \lq\lq vertices\rq\rq\ in 
determining $v_j$ for numerical relations.  

Then we let $\tilde \Gamma_{j k}$ denote the union of the curves $\gb_i$ in 
$\Gamma_{j k}$ and form 
$$  \tilde \Gamma_j \,\, = \,\, (\tsty\bigcup_k \tilde \Gamma_{j k} \cup_i T_{j i})/ 
\sim.  $$
Then each tree $T_{j k}$ is contractible and if we collapse each $T_{j k}$ to 
a point then the quotient is isomorphic to the extended graph $\gL_j$.   

We may lift the edges of the tree $R_j$ to a collection of edges in $\tilde 
\Gamma_j$, by which we mean we can choose curves $\gb_i$ in the $\tilde 
\Gamma_{j k}$ which correspond to the edges of $R_j$.  We denote the union 
of those curves by $R^{\prime}_j$.  Then we form 
$$ \cT_j  \,\, = \,\, (R^{\prime}_j \cup (\tsty\bigcup_i T_{j i}))/ \sim  $$
where now \lq\lq $\sim$\rq\rq\ only involves attaching the points $x_{j k}$ of 
$R^{\prime}_j$ to the corresponding points $y_m$ of the $T_{j i}$.  
 
\begin{Lemma}
\label{Lem7.2}
$\cT_j$ is a maximal tree in the extended graph $\tilde \Gamma_j$.  
\end{Lemma}
\begin{proof}
First, $\cT_j$ is a tree because each $T_{j i}$ is contractible and if we 
collapse each to a point we obtain $R_j$ which is contractible; hence, $\cT_j$ 
is contractible and is a graph.  

To see $\cT_j$ is maximal in $\tilde \Gamma_j$, we note any other edge of 
$\tilde \Gamma_j$ either corresponds to an edge of one of the $\cY_{j k}$, 
or to another edge of $\gL_j$ under the collapsing map.  In the first case, if 
the edge $\eta$ is in $\cY_{j k}$, then as $T_{j i}$ is maximal $T_{j i} \cup 
\eta$ is no longer simply connected; thus nor will be $\cT_j \cup \eta$.  If 
instead the added edge $\eta$ maps to another edge $\eta^{\prime}$ of 
$\gL_j$ under the collapsing map, then $R_j \cup \eta^{\prime}$ is no longer 
simply connected.  Then we may lift a noncontractible loop $\gamma$ of 
$R_j \cup \eta^{\prime}$ to a loop in $\cT_j \cup \eta$ (joining edges of 
$\gamma$ lift to edges which share a common $Y\!$--node, say $\cY_{j k}$, 
and hence can be connected by a path in $T_{j i}$), destroying the simple 
connectivity of $\cT_j$.  Thus, $\cT_j$ is maximal.  
\end{proof}

Then by \fullref{Prop2.5},  $\pi_1(\mathcal{S}\cY_j)$ is a free group with a 
free generator for each edge of $\tilde \Gamma_j$ not in $\cT_j$.  There 
are $\gl_j$ edges of $\gL_j$, and $v_{j k} + 1$ edges for each $\cY_{j k}$ by 
\fullref{Prop3.1}.  Thus, there are a total of $\gl_j + v_j + c_j$ free 
generators (recall $v_j = \sum_k v_{j k}$ and $c_j$ is the number of 
connected components $\cY_{j k}$ of $\cY_j$).  Such a free group is the 
free product of the free groups on $v_{j k} + 1$ generators which are 
isomorphic to $\pi_1(\cY_{j k})$, and a free group on $\gl_j$ generators 
which is isomorphic to $\pi_1(\gL_j)$.  
\end{proof}

The $\smash{\pi_1(\cY_{j k})}$ are naturally identified as subgroups of 
$\smash{\pi_1(\mathcal{S}\cY_j)}$.  Also, 
the collapsing map $\smash{\varphi \co \tilde \Gamma_j \to \gL_j}$ is a homotopy 
equivalence; hence, $\smash{\pi_1(\tilde \Gamma_j) \simeq \pi_1(\gL_j)}$.  Then 
the inclusion $\smash{\tilde \Gamma_j \subset \mathcal{S}\cY_j}$, identifies a subgroup 
isomorphic to $\pi_1(\gL_j)$.

The second step is to compute the fundamental group of $\mathcal{S}\cY_j^{\prime}$.  It is homotopy 
equivalent to $\smash{\big(\bigvee_{i = 1}^{q_j} S^1\big) \vee \mathcal{S}\cY_j}$.  Hence, again by the Seifert--Van Kampen 
theorem, 
$$  \pi_1(\mathcal{S}\cY_j^{\prime}) \,\, \simeq  \,\,  \pi_1(\mathcal{S}\cY_j) * F_{q_j}   
$$
where $F_{q_j}$ is a free group on $q_j$ generators corresponding to the 
generators of $\smash{\bigvee_{i = 1}^{q_j}\! S^1}$ obtained from the $E_{j k}$ and the 
identifications on the boundaries $\partial D^2/ \sim$.  Hence,

\begin{Corollary}
\label{Cor7.3}
$\pi_1(\mathcal{S}\cY_j^{\prime})$ is a free group on $Q_j (= \gl_j + v_j + c_j + 
q_j)$ generators.
\end{Corollary}
\subsection*{Computing the fundamental group of an irreducible medial 
component}  We now use the proceeding to compute the fundamental group of $M_j$ by 
determining the effect of attaching each medial sheet $S_{j k}$.  

\begin{figure}[ht!]
\labellist
\small\hair 2pt
\pinlabel {$\tau_{k0}$} at 190 690
\pinlabel {$S_{jk}$} at 186 592
\pinlabel {$\delta_{jk}$} [t] at 232 672
\pinlabel {$\zeta_{jk}$} [bl] at 265 686
\endlabellist
\centerline{\includegraphics[width=5cm]{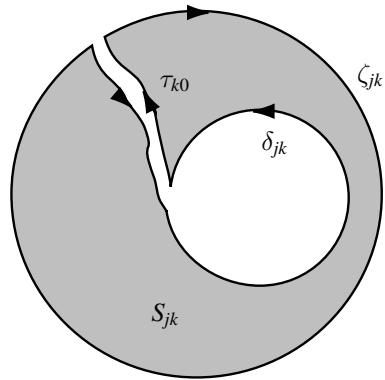}}
\caption{\label{fig.11a} Representing  $S_{j k}$ by attaching a $2$--disk to  
$\mathcal{S}\cY_j^{\prime}$}
\end{figure} 

Then we can compute the fundamental group of $M_j$ by attaching the 
remaining $2$--disks to $\smash{\mathcal{S}\cY_j^{\prime}}$; see \fullref{fig.11a}. 
First, we recall the loops  at $x_{k 0}$: $\smash{\zeta_{j k}}$ for each medial sheet 
without edge curves, and $\smash{\gd_{j k}}$ (obtained by following $C_{j k}$ in the 
counterclockwise direction).  We again let $\cT_j$ denote a maximal tree in 
$\smash{\tilde \Gamma_j}$ and we choose a base point $x_0 \in \smash{\tilde \Gamma_j}$, 
and for each $x_{k 0}$, a path $\ga_k$ in the tree $\cT_j$ from $x_0$ to 
$x_{k 0}$.  As $\cT_j$ is a tree, the path homotopy class of $\ga_k$ only 
depends on $x_{k 0}$.  We form 
\begin{equation*}
 \tilde \gd_{j k} = \bar \ga_k * \gd_{j k}* \ga_k \quad \mbox{ and } \quad 
\tilde \zeta_{j k} = \bar \ga_k * \zeta_{j k} * \ga_k.
\end{equation*}  
These define elements $\smash{r_{j k} = \tilde \gd_{j k} * \tilde \zeta_{j k}^{-1}}$ in 
$\pi_1(\mathcal{S}\cY_j^{\prime}, x_0)$, which after composing with the inclusion map yields elements 
of $\pi_1(M_j, x_0)$.   

\begin{Thm}
\label{Thm7.5}
$\pi_1(M_j)$ has the representation 
\begin{equation}
\label{Eqn7.5a}
    \pi_1(M_j)   \quad \simeq \quad F_{Q_j}/N   
\end{equation}
where $F_{Q_j}$  is a free group on $Q_j$ generators, and $N$ is the normal 
subgroup generated by the elements $\smash{r_{j k} = \tilde \zeta_{j k} * (\tilde 
\gd_{j k} )^{-1}}$, one for each medial sheet without edge curves.  
Equivalently,  $\pi_1(M_j)$ has exactly the relations 
\begin{equation}
\label{Eqn7.5b}
r_{j k} = \tilde \gd_{j k} * \tilde \zeta_{j k}^{-1} = 1 \quad \mbox{ or 
equivalently }  \quad \tilde \gd_{j k} \,\, = \,\,  \tilde \zeta_{j k}.    
\end{equation}
Also, for $F_{Q_j}$, $q_j$ of the generators correspond to the generators from the identified 
boundaries of the $2$--disks, and $v_j + c_j$ generators come from $\cY_j$.  

Also, $\gl_j$ generators come from the generators for $\pi_1(\gL_j)$.  In fact, the 
collapsing map $\varphi_j \co \mathcal{S}\cY_j \to \gL_j$ extends to a continuous map $\tilde \varphi_j 
\co M_j \to \gL_j$ which is a surjective map on $\pi_1$.
\end{Thm}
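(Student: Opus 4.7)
The plan is to combine the CW-style decomposition from \fullref{Prop6.4} with \fullref{Cor7.3} and the standard effect of attaching $2$-cells on $\pi_1$ via the Seifert--Van Kampen theorem. By \fullref{Prop6.4}(1), $M_j$ is homotopy equivalent to the space $M_j^{\prime}$ obtained from $\mathcal{S}\cY_j^{\prime}$ by attaching one $2$-disk for each medial sheet $S_{j k}$ without edge curves, with attaching path $\xi_{j k} = \zeta_{j k} * \bar \gd_{j k}$ given by \eqref{Eqn6.2}. By \fullref{Cor7.3}, $\pi_1(\mathcal{S}\cY_j^{\prime}, x_{k 0})$ is free of rank $Q_j = \gl_j + (v_j + c_j) + q_j$, and the constructions in the proofs of \fullref{Prop7.1} and \fullref{Lem6.4} exhibit the free generators naturally as $\gl_j$ coming from $\gL_j$, $v_j + c_j$ coming from $\cY_j$, and $q_j$ coming from the wedges $\bigvee S^1$ inside the $L_{j k}$.

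\textbf{Deriving the presentation.} Fixing a maximal tree $\cT_j$ in $\tilde \gG_j$, a base point $x_0 \in \cT_j$, and paths $\ga_k$ in $\cT_j$ from $x_0$ to each $x_{k 0}$, change of base point along $\ga_k$ translates $\xi_{j k}$ (based at $x_{k 0}$) into $\tilde \zeta_{j k} * \tilde \gd_{j k}^{-1} = r_{j k}$ (based at $x_0$); there is no residual ambiguity because $\ga_k$ lies in the contractible tree $\cT_j$. Attaching a $2$-cell along a loop kills exactly the class of that loop in $\pi_1$ and introduces no further relation; applying this inductively over the sheets $S_{j k}$ without edge curves produces the presentation \eqref{Eqn7.5a}, with $N$ the normal closure of $\{r_{j k}\}$, equivalent to the relations \eqref{Eqn7.5b}.

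\textbf{The map $\tilde \varphi_j$ and surjectivity.} To construct $\tilde \varphi_j$, I first define it on $\mathcal{S}\cY_j^{\prime}$ as the composition
$$ \mathcal{S}\cY_j^{\prime} \longrightarrow \mathcal{S}\cY_j \longrightarrow \gG_j \stackrel{\varphi_j}{\longrightarrow} \gL_j, $$
where the first arrow is the retraction of \fullref{Lem6.4}(3) applied to each $L_{j k}$, and the second is the retraction of \fullref{Prop6.1}. The extension of $\tilde \varphi_j$ over the $2$-disk attached along $\xi_{j k}$ exists precisely when the image of $\xi_{j k}$ in $\gL_j$ is null-homotopic: $\zeta_{j k}$ lies in the wedge summand $\bigvee S^1$ of \fullref{Prop6.4}(2) and so is collapsed to a point by the retraction, while $\gd_{j k}$, after retracting $\gD_{j k}$ onto $\Gamma_{j k}$, becomes a concatenation of loops of the form $\gb_i * (\partial \bar B_i) * \bar \gb_i$, and upon collapsing each $Y\!$-node to a point each such factor becomes an out-and-back traversal of a single edge of the $1$-complex $\gL_j$, hence null-homotopic. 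The main technical point is precisely this calculation: tracing how $C_{j k}$ is expressed in $\Gamma_{j k}$ and exploiting the fact that every $\partial \bar B_i$ is absorbed into some $Y\!$-node. Surjectivity on $\pi_1$ then follows because the $\gl_j$ free generators of $\pi_1(\gL_j)$ pull back under $\tilde \varphi_j$ to $\gl_j$ of the free generators of $F_{Q_j}$, which do not occur in any $r_{j k}$ and so survive the quotient by $N$.
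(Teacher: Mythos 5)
Your derivation of the presentation \eqref{Eqn7.5a}--\eqref{Eqn7.5b} is essentially the paper's own argument: both routes use \fullref{Prop6.4} together with \fullref{Cor7.3}, transport the attaching loops $\xi_{j k}$ to the base point along tree paths $\ga_k$, and invoke Seifert--Van Kampen to kill exactly the normal closure of the $r_{j k}$. Where you genuinely diverge is the construction of $\tilde \varphi_j$ and the surjectivity claim. The paper builds a continuous map on $M_j$ itself: it collapses the components $\cY_{j k}$ to points, observes that each $\tilde \gD_{j k}$ becomes a $2$--disk and hence an absolute retract, extends the identity on $\tilde \gD_{j k}$ over each collapsed sheet to obtain a retraction $\tilde M_j \to \tilde{\mathcal{S}}\cY_j$, and gets surjectivity at once because the homotopy equivalence $\tilde \Gamma_j \to \gL_j$ factors through $\tilde \varphi_j$. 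You instead work on the CW model, define the map on $\mathcal{S}\cY_j^{\prime}$ by collapsing, and extend cell by cell after verifying that each attaching loop maps to a null-homotopic loop in $\gL_j$; your computation that the image of $\gd_{j k}$ is a concatenation of out-and-back traversals of single edges of $\gL_j$ is correct and is the real content of that step, which the paper's absolute-retract argument handles implicitly. Each route is legitimate: the paper's gives a map on the actual stratified space with no loop-by-loop check, yours makes the vanishing of the attaching classes in $\pi_1(\gL_j)$ explicit.

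Three small repairs are needed in your version. First, to extend $\varphi_j$ over $M_j$ rather than over $M_j^{\prime}$, compose with the strong deformation retraction $M_j \to M_j^{\prime}$ of \fullref{Prop6.4}, which fixes $\mathcal{S}\cY_j$. Second, the first arrow of your composite is not the retraction of \fullref{Lem6.4}\,(3): that retraction lands in $\tilde L_{j k}$, which still contains $\tau_{k 0}$, the $\tilde E_{k i}$ and $\partial D^2/\!\sim$, and so does not map into $\mathcal{S}\cY_j$; what you actually use later is the quotient map collapsing those $1$--cells to $x_{k 0}$, so say that instead. Third, the clause that the $\gl_j$ generators \lq\lq do not occur in any $r_{j k}$\rq\rq\ is false in general: if a sheet without edge curves has several boundary circles attached so as to create a loop in $\gL_j$, the corresponding generator appears in $r_{j k}$ as a conjugating factor. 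Fortunately the clause is superfluous: surjectivity already follows from the first half of your sentence, since the composite $\pi_1(\mathcal{S}\cY_j^{\prime}) \to \pi_1(M_j) \to \pi_1(\gL_j)$ sends the $\gl_j$ designated free generators onto a generating set of $\pi_1(\gL_j)$, exactly as in the paper's factoring argument.
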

\begin{proof}
$M_j$ is obtained up to homotopy equivalence by attaching $2$--disks to 
$\mathcal{S}\cY_j^{\prime}$ along the closed paths given by \eqref{Eqn6.3}; see \fullref{fig.11a}.  By the Seifert--Van Kampen theorem this introduces relations 
killing the corresponding homotopy classes given by \eqref{Eqn6.3}, but with 
the base point moved to $y_j$. These are exactly the relations given in  
\eqref{Eqn7.5b}.  The identifications of the various generators from the 
$\cY_{j k}$, the edge curves of the $S_{j k}$, and the identified boundaries 
of the $2$--disks follow from the Seifert--Van Kampen theorem.  

To show that the collapsing map $\varphi_j \co \mathcal{S}\cY_j \to
\gL_j$ extends to $M_j$, we first construct a quotient $\tilde M_j$ of
$M_j$ by collapsing the components $\cY_{j k}$ to points.  Within
$\tilde M_j$ is the subspace $\tilde{\mathcal{S}}\cY_j$, formed from
$\mathcal{S}\cY_j$ by collapsing the components $\cY_{j k}$ to points.
The collapsing map $\varphi_j \co \mathcal{S}\cY_j \to \gL_j$ factors
through $\tilde{\mathcal{S}}\cY_j$.  Hence, it is sufficient to extend
this collapsing map to $\tilde M_j$ and then compose with the quotient
map $M_j \to \tilde M_j$.

Then in $\tilde{\mathcal{S}}\cY_j$, $\gD_{j k}$ has its inner boundary edges collapsed 
to points to form $\tilde \gD_{j k}$, which is topologically a $2$--disk.  As a 
$2$--disk is an absolute retract, we can extend the identity map on $\tilde 
\gD_{j k}$ to a continuous map $\tilde S_{j k} \to \tilde \gD_{j k}$, where 
$\tilde S_{j k}$ is $S_{j k}$ with each boundary edge in $\gD_{j k}$ collapsed 
to a point.  Hence, these continuous maps together give a retract $\tilde M_j 
\to \tilde{\mathcal{S}}\cY_j$.  Composed with the collapsing map gives the continuous 
extension $\tilde \varphi_j$.  

Finally the collapsing map $\tilde \Gamma_j \to \gL_j$ is a homotopy 
equivalence, and factors through $\tilde \varphi_j \co M_j \to \gL_j$.  Thus, 
the induced map on $\tilde \varphi_{j *} \co \pi_1(M_j) \to \pi_1(\gL_j)$ is 
surjective.
\end{proof}

Hence, in the special case when $M_j$ is simply connected, we obtain \fullref{Thm5.0}.

\section{Exact sequence for attaching sheets}  
\label{S:sec8}  Next, we compute the homology and Euler characteristic of an irreducible 
medial component $M_j$. We use the two ways of obtaining $M$: by the medial sheet 
representation, attaching surfaces with boundaries to $\mathcal{S}\cY_j$, or the CW-representation 
obtained by attaching cells to $\mathcal{S}\cY_j^{\prime}$.  Using these we may compute the exact 
sequences of pairs $(M_j, \mathcal{S}\cY_j)$ and $(M_j, \mathcal{S}\cY_j^{\prime})$.  Furthermore, we have 
the inclusion of pairs $(M_j, \mathcal{S}\cY_j) \subset (M_j, \mathcal{S}\cY_j^{\prime})$  This leads to the 
homomorphism of exact sequences of pairs using reduced homology.  Since all spaces have 
dimension at most $2$ the sequence ends at $H_2$.  
\begin{equation}
\begin{split}
\label{CD8.0}
&\begin{CD} 
{0 } @>>> {H_2(\mathcal{S}\cY_j)} @>>>  { H_2(M_j) } @>>> {H_2(M_j, \mathcal{S}\cY_j)}    
@>\gd>>  \\
 @|    @VVV     @|      @VVV   \\
{0 }  @>>>  {H_2(\mathcal{S}\cY_j^{\prime})}  @>>>  { H_2(M_j) }  @>>>  {H_2(M_j, 
\mathcal{S}\cY_j^{\prime})}  @>\gd>>  
\end{CD}   \\
\vspace{2ex}
&\hspace{6em}\begin{CD}\vrule width 0pt height 15pt  {H_1(\mathcal{S}\cY_j)} @>>>  { H_1(M_j) } @>>> {H_1(M_j, \mathcal{S}\cY_j)} 
@>>> { 0}  \\
   @VVV   @|     @VVV    @|    \\
{H_1(\mathcal{S}\cY_j^{\prime})}  @>>>  { H_1(M_j) }  @>>>  {H_1(M_j, 
\mathcal{S}\cY_j^{\prime})}  @>>>  { 0}
\end{CD} 
\end{split}
\end{equation}

First, using \fullref{Prop6.1} and \fullref{Prop6.4}  we can compute 
the groups $H_i(\mathcal{S}\cY_j)$ and $H_i(\mathcal{S}\cY_j^{\prime})$, which are zero 
for $i > 1$  and for $i = 1$
\begin{gather*}
H_1(\mathcal{S}\cY_j) \,\, \simeq \,\, \Z^{\gl_j + v_j + c_j}\\
\tag*{\hbox{and}} 
H_1(\mathcal{S}\cY_j^{\prime}) \,\, \simeq \,\, \Z^{\gl_j + v_j + c_j + q_j}  \,\, 
\simeq \,\, \Z^{Q_j}.
\end{gather*}
Second, we can compute the relative groups by excision using \fullref{Prop6.1} and \fullref{Prop6.4}:
\begin{equation}
\label{Eqn8.3a}
H_i(M_j, \mathcal{S}\cY_j) \,\, \simeq \,\, \oplus_k H_i(\tilde S_{j k}, C_{j k})
\end{equation}
where the sum is over all medial sheets, and 
\begin{equation*}
H_i(M_j, \mathcal{S}\cY_j^{\prime}) \,\, \simeq \,\, \oplus_k H_i(D^2, \partial D^2)
\end{equation*}
where the second sum is over medial sheets without edge curves.
From this, we conclude $H_i(M_j, \mathcal{S}\cY_j^{\prime}) = 0$ 
if $i \neq 2$, and 
\begin{equation*}
H_2(M_j, \mathcal{S}\cY_j^{\prime}) \,\, \simeq \,\,  \Z^{s_{0 j}}
\end{equation*}
(recall $s_{0 j}$ denotes the number of medial sheets of $M_j$ which are 
without edge curves).  Likewise, from \eqref{Eqn8.3a}, we compute $H_i(M_j, 
\mathcal{S}\cY_j)$
using the next lemma.
\begin{Lemma}
\label{Lem8.5}
Suppose $N$ is a compact surface with boundary, with a distinguished 
boundary component $C$.  Let $N^{\prime}$ denote the surface obtained 
from $N$ by attaching a $2$--disk to $N$ along the boundary component $C$.  
Then
$$  H_i(N, C) \,\, \simeq \,\,  \tilde H_i(N^{\prime}).       $$
\end{Lemma}
\begin{proof}
By excision
$$   H_i(N^{\prime}, D^2)  \,\, \simeq \,\,  H_i(N, C).    $$
Then by the exact sequence of the pair $(N^{\prime}, D^2)$ using reduced 
homology, we conclude
$$   \tilde H_i(N^{\prime})  \,\, \simeq \,\,  H_i(\tilde N, D^2).    \proved$$
\end{proof}
Hence, by \fullref{Lem8.5}: 
\begin{enumerate}
\item If $N$ has more than one boundary component, or $N$ is nonorientable, 
then $H_2(N, C) = 0$; while if $N$ is orientable with only the single boundary 
component $C$, then $H_2(N, C) = \Z$.  
\item For $(N, C) = (\tilde S_{j k}, C_{j k})$, $\mbox{rk}(H_1(\tilde S_{j k}, 
C_{j k})) = q_{j k} - \gevar_{j k}$, where $\gevar_{j k} = 1$ if $e_{j k} = 0$ 
and $\tilde S_{j k}$ is nonorientable, otherwise  $\gevar_{j k}= 0$.
\end{enumerate}
Thus,  by \eqref{Eqn8.3a}, $H_i(M_j, \mathcal{S}\cY_j) = 0$ for $i > 2$, and 
\begin{equation*}
H_i(M_j, \mathcal{S}\cY_j) \,\, \simeq \,\, \Bigg\{ 
\begin{array}{ll} 
{\Z^{s_{0 o, j}}}  &  {i = 2}  \\ 
{\Z^{(q_j - s_{0 n, j})} \oplus (\Z_2)^{s_{0 n, j}} } &  {i = 1}  
\end{array}  
\end{equation*}
where we recall $s_{0 o, j}$, resp.\ $s_{0 n, j}$, denotes the number of orientable, resp.\ 
nonorientable, medial sheets with no edge curves.  

Hence, the homomorphism of exact sequences \eqref{CD8.0} takes the form
\begin{equation}
\begin{split}
\label{CD8.8}
\begin{CD} 
{0 } @>>> {0 }  @>>>  { H_2(M_j)}  @>>> {\Z^{s_{0 o, j}} }    @>\gd>>  \\
 @|      @|     @|      @V{\iti}VV   \\
{0 } @>>> {0 } @>>>  { H_2(M_j) }  @>>> {\Z^{s_{0 j}} }  @>\gd>>  
\end{CD}\hspace{1.4in}\\
\vspace {2ex}
\hfill  \begin{CD}\vrule width0pt height 15pt  {\Z^{(\gl_j + v_j + c_j)} } @>>>  { H_1(M_j)}  @>>> 
{\Z^{(q_j - s_{0 n, j})} \oplus (\Z_2)^{s_{0 n, j}} } @>>> { 0}  \\
   @ViVV   @|     @V0VV    @|    \\
{\Z^{(\gl_j + v_j + c_j + q_j)} } @>>>  { H_1(M_j)} @>>> {0 } @>>> { 0}
\end{CD} 
\end{split}
\end{equation}

Recall $\Z^{s_{0 j}} = \Z^{s_{0 n, j}} \oplus \Z^{s_{0 o, j}}$ is the number of 
medial sheets without medial edge curves, and $\iti$ denotes inclusion of 
$\Z^{s_{0 o, j}}$.  

We can draw a number of conclusions from the diagram \eqref{CD8.8}.
 We let $b_i$ denote the $i$--th Betti number of $M_j$, and we can take the 
Euler characteristics of either row and obtain the same formula.  For 
example, from the first row of \eqref{CD8.8},
\begin{equation*}
b_2 - s_{0 o, j} + (\gl_j + v_j + c_j) - b_1 + (q_j - s_{0 n, j}) \quad = \quad 0. 
\end{equation*}
This yields
\begin{equation*}
\tilde \chi(M_j)  \quad  = \quad s_{0 j} - (\gl_j + v_j + c_j + q_j)  \quad = 
\quad s_{0 j} - Q_j.
\end{equation*}
Substituting \eqref{Eqn3.2c} into this equation yields  
\eqref{Eqn3.3b} as asserted.  

\subsection*{Computing homology by the algebraic attaching 
homomorphism} Next, we use the second row.  The middle homomorphism is exactly the algebraic 
attaching homomorphism
\begin{equation}
\label{Eqn8.11}
\Psi_j \co \Z^{s_{0 j}}  \overset{\gd}{\to}  \Z^{Q_j}. 
\end{equation}
It sends the generator of $H_2(\tilde S_{j k}, C_{j k})$ for orientable  $S_{j k}$ without 
edge curves to the image of its boundary (ie $C_{j k}$) under the attaching map to $\cY_j$.  
Here $\Z^{Q_j}$ represents the first homology of $\mathcal{S}\cY_j^{\prime}$.  Then by the exactness 
of the bottom row of \eqref{CD8.8}, the kernel and cokernel of the $\Psi_j$ are $H_2(M_j)$, 
respectively $H_1(M_j)$, yielding (2) of \fullref{Thm3.3}.  Furthermore, by the top row 
of \eqref{CD8.8}, we obtain $\mbox{rk} (H_2(M_j; \Z)) \leq s_{0 o, j}$.  As $H_*(M_j; \Z)$ 
is torsion free, we can tensor the top row with $\Z/2\Z$ and obtain $\mbox{rk} (H_1(M_j; \Z)) 
\geq (q_j - s_{0 n, j}) + s_{0 n, j} = q_j$.  Also,  by the bottom row, or the formula for 
$\tilde \chi (M_j)$, 
$$\mbox{rk} (H_1(M_j; \Z))  \,  = \, Q_j - s_{0 j} + b_2  \, \leq \,  Q_j - 
s_{0 j} + s_{0 o, j} \, =  \,  Q_j - s_{0 n, j}. $$
This completes the proof of \fullref{Thm3.3}

The third consequence of diagram \eqref{CD8.8} is for the cases when 
various homologies of $M_j$ vanish.  
\begin{proof}[Proof of \fullref{Cor3.4}]
First, suppose $H_1(M_j) = 0$.  From the first row, we conclude that both 
$q_j - s_{0 n, j} = 0$ and $s_{0 n, j} = 0$.  Hence, $q_j = s_{0 n, j} = 0$.  
Then $q_j = \sum_k q_{j k}$ is a sum of nonnegative integers, so $q_{j k} = 
0$.  By \eqref{Eqn3.2a}, each $g_{j k} = 0$ and if $e_{j k} > 0$, then $e_{j k} 
= 1$.  In particular, $S_{j k}$ must be a $2$--disk with a finite number of 
holes, and the edge of at most one of the holes is an edge curve of $M_j$.  
In particular, all of the medial sheets are orientable so 
\begin{equation}
\label{Eqn8.12}
s_{0 o, j} \,\, = \,\, s_{0 j} \,\, = \,\, s_j \, - \, e_j.
\end{equation}  
 This establishes (2) and (3) of \fullref{Cor3.4}.  Furthermore, by 
\fullref{Thm7.5}, the collapsing map $p \co M_j \to \gL_j$ induces a 
surjection on $\pi_1$ and hence on the abelianized $\pi_1$.  Hence, 
$H_1(\gL_j; \Z) = 0$, so $\gL_j$ is a tree and $\gl_j = 0$, establishing (1) of 
\fullref{Cor3.4}. 

 Second, suppose in addition that $H_2(M_j; \Z) = 0$.  Then
$$  \gd \co \Z^{s_{0 o, j}}  \simeq   \Z^{(\gl_j + v_j + c_j)}.  $$ 
$$
s_{0 o, j} \,\, = \,\,\gl_j \, + \, v_j \, + \, c_j.
\leqno{\hbox{Thus,}}
$$ 
 Hence, as $\gl_j = 0$, this equation and \eqref{Eqn8.12} together imply
\begin{equation*}
s_j \, - \, e_j \,\, = \,\, v_j \, + \, c_j.
\end{equation*}  
    The remaining conditions on the fundamental group follow from 
\fullref{Thm7.5}.  
\end{proof}

\section{Algorithm for contracting contractible medial axes}  
\label{S:sec9} In the contractible case, the structure suggests a method for contracting a 
region.  We would like a specific algorithm for contracting the medial axis when it is 
contractible.  First, we can deform it by sliding along fin curves so we have the simplified 
structure $\hat M$.  Then we want to simplify each irreducible medial component.  If the 
medial component has a nonempty $Y\!$--network , then we choose a medial sheet with a medial 
edge curve, and deform the medial sheet so the medial edge curve touches the $Y\!$--network.  
At that point a transition occurs with two fin points being created as in \fullref{fig.15}.  
We can then slide the fin curve until it doesn\rq t meet the remaining $Y\!$--network.  This 
does not change the homotopy type of the medial component.  

\begin{figure}[ht!]
\centerline{\includegraphics[width=10cm]{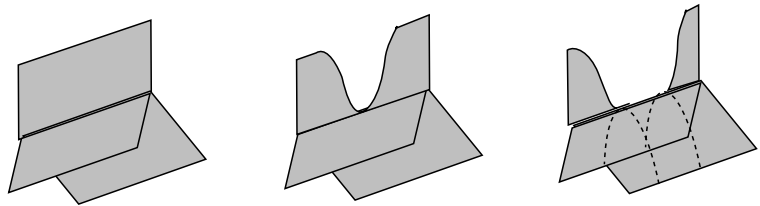}}
\caption{\label{fig.15} Creation of a pair of fin points from a medial edge 
curve}
\end{figure} 

We repeat this process as long as the $Y\!$--network of the component is 
nonempty.  Applying this to all components, we reach a point where all 
$Y\!$--networks are empty.  In the process we have created more irreducible 
components attached to each other along fin curves.  By contractibility, all 
of the medial sheets are $2$--disks.  The attaching is described by a tree.  
We then proceed to simplify the tree.  

We take one of the outermost branches.  It corresponds to a $2$--disk
attached to another sheet along a segment of its boundary as a fin
curve.  We contract down the sheet and the fin curve to a point.  Then
we repeat this process until there is only a single medial component
left.  It is a $2$--disk and hence contractible.

The only point about the preceding argument is the assertion that if a medial 
component is contractible then it has an edge curve.  This step has not been 
completed and we are left with a conjecture.  

\medskip
{\bf Conjecture}\qua If an \lq\lq irreducible medial component\rq\rq\ is contractible then it 
has an edge curve. \medskip

The \lq\lq irreducible medial component\rq\rq\ is in quotes because after the 
first step we no longer know that it really comes from the medial axis of a 
region.  What we really are asking is whether a singular space which is 
embedded in $\R^3$ and has the local singular structure of a medial axis 
satisfies the conjecture. 

\section{Discussion and open questions}  
\label{S:sec10}

There are a number of basic questions which are unanswered.
\begin{enumerate}
\item  The conditions we gave are for a medial axis which already exists.  
Given an abstract model for a medial axis, satisfying all of the properties, this model may 
not be realized in $\R^3$.  In particular, we must understand the conditions allowing 
embeddings of the medial sheets  so they do not intersect.   
\item  Likewise, there are related restrictions on the attaching maps to 
$Y\!$--network so that three sheets are attached at each point.  What 
constraints does this place on possible structures?  This and the preceding 
point appear to involve subtle questions in topology. 
\item In order to complete the algorithm for contracting regions, we need to 
prove the existence of edge curves on contractible irreducible \lq\lq medial 
components\rq\rq.  
\item   The decomposition into irreducible medial components requires 
operations on fin curves.  These local operations (with the exception of the 
cutting of essential fin curves) are examples of local deformations of medial 
axes resulting from deformations of regions by results of Giblin and Kimia 
\cite{GK} building on the results of Bogaevsky \cite{Bg,Bg2}.  Can we 
obtain the operations on fin curves as global deformations of the region? 
\item  The structure of the medial axis for specific types of regions such 
as, for example, knot complement regions contains topological information 
about the region.  For example, if we look for a topological representative of 
the region for which the number of medial sheets is minimal, then as the 
reduced Euler characteristic is fixed, we obtain from \eqref{Eqn3.3b} an 
expression for $s_j - \tilde \chi(M_j)$ as a sum of five nonnegative  integers 
$e_j +  v_j +  c_j +  G_j + \gl_j$.  What possible partitions of $s_j - \tilde 
\chi(M_j) $ into five nonnegative integers are actually possible, and how does 
this relate to the structure of the original knot?
\end{enumerate}

\bibliographystyle{gtart}
\bibliography{link}

\end{document}